\theoremstyle{plain} 
\newtheorem{theorem} {Theorem} [section]
\newtheorem{proposition}[theorem]{Proposition}
\theoremstyle{definition}
\theoremstyle{remark}
\newtheorem{remark} {Remark}
\numberwithin{equation}{section}
\def \R{\mathbb R}
\def \Rgez{\R_{\ge 0}}
\def \Rgtz{\R_{> 0}}
\def \Rzero{\mathcal R_0}
\def \Rone{\mathcal R_1}
\def \Rtwo{\mathcal R_2}
\def \Ione{\mathcal I_1}
\def \Itwo{\mathcal I_2}
\def \Ioned{\mathcal I_1^\delta}
\def \Itwod{\mathcal I_2^\delta}
\def \deltaone{\delta^{*}}
\def \deltatwo{\delta^{**}}
\def \aand{\qquad \textrm{and} \qquad}
\def \ccc{\widetilde c}
\begin{document}


\title[A Bare-Bones Mathematical Model of Radicalization]{A Bare-Bones Mathematical Model of Radicalization}

\keywords{Extremism; Mathematical sociology; Population model; Stability \\
2010 {\it Mathematics Subject Classification:} 34D20, 91D99
}

\author[]{C. Connell McCluskey, Manuele Santoprete\\
}



\begin{abstract}
Radicalization is the process by which people come to adopt increasingly extreme political or religious
ideologies.  While radical thinking is by no means problematic in itself, it becomes a threat to national
security when it leads to violence.  We introduce a simple compartmental model (similar to epidemiology
models) to describe the radicalization process.  We then extend the model to allow for multiple ideologies.
Our approach is similar to the one used in the study of multi-strain diseases.  Based on our models, we
assess several strategies to counter violent extremism.
\end{abstract}

\maketitle

\begin{center} \today \end{center}

\section{Introduction}
 Radicalization is the process by which people come to adopt increasingly extreme political or religious ideologies.  
In recent years, radicalization has become a major concern for national security because it can lead to violent extremism.
Governments and security services are under intense pressure to detect and stop individuals undergoing radicalization
before they became violent extremists.  As a consequence, they are making a substantial effort to better understand the
radicalization process and to identify the psychological, social, economic, and political circumstances that lead to violent
extremism.  It is in this context that this paper attempts to describe radicalization mathematically by modeling the spread
of extremist ideology in a manner similar to how the spread of infectious diseases has been modelled.

Vector-borne diseases have been used as a metaphor for Islamist militancy and radicalization by Yacoubian and Stares
\cite{yacoubian2005rethinking}, and by Post \cite{post2006countering}.
In this analogy the agent is the ideology, the vector the recruiters, and the hosts are the members of the vulnerable
population.  This analogy, although forceful, is inexact because transmission in vector-borne diseases (e.g.  malaria)
involves at least two species: the vector and the human host, in which case it is necessary to consider the dynamics of
both species.

A more realistic analogy may be to consider, for instance, tuberculosis as a metaphor for radicalization.  
In tuberculosis we have an agent (Mycobacterium tuberculosis) and a population consisting of susceptible individuals
(who are vulnerable to the agent), latently infected individuals (who have been infected with M. tuberculosis but are
not yet infectious) and infectious individuals (who can transmit the disease).  In this analogy the agent is the ideology,
susceptible individuals are those who have not embraced the extremist ideology, latent individuals are those who have
accepted the extremist ideology to some degree, but are not actively spreading it, and infectious individuals are recruiters
who are spreading the extremist ideology to susceptibles.
This point of view is useful for describing the radicalization process of individuals and the recruitment in terrorist organizations
from a mathematical perspective.

In Section 2 we introduce a compartmental model to describe the radicalization process.  We divide the
population into three compartments, $ (S) $ susceptible, $ (E) $ extremists, and $ (R) $ recruiters.  
This model resembles tuberculosis model studied in \cite{mccluskey2006lyapunov}.

A similar approach was taken in \cite{castillo2003models} where the authors studied the spread of fanatical behaviours as a
type of epidemiological contact process.  Our model, however, is different because it emphasizes the role of recruiters in the
dynamics of terrorist organizations.  
We note that our model, with minor modifications, may be applied to political parties (see \cite{jeffs2016activist} for a
mathematical model of political party growth).  We view this model as bare-bones because we concentrate our attention
only on one aspect of this complex problem, namely, the recruitment process.  

The importance of recruiters in the radicalization process was recognized by President Barack Obama, who in a 2015
op-ed for the LA times \cite{obama_2015} wrote ``We know that military force alone cannot solve this problem. Nor
can we simply take out terrorists who kill innocent civilians. We also have to confront the violent extremists - the
propagandists, recruiters and enablers - who may not directly engage in terrorist acts themselves, but who radicalize,
recruit and incite others to do so."

Using our model, we want to test some possible strategies for countering violent extremism.  A useful parameter that quantifies
the transmission potential of an extremist ideology in our model is the basic reproduction number $\mathcal{R}_0$.  In
mathematical epidemiology it is defined as the number of secondary infections produced when one infectious individual
is introduced into a population consisting only of susceptible individuals.  In Section 2, we will show that when
$\mathcal{R}_0<1 $ the ideology will be eradicated; that is, eventually the number of recruiters and extremist will go to
zero.  When $\mathcal{R}_0 > 1 $ the ideology will become endemic; that is, the recruiters and extremists, will establish
themselves in the population.

In our bare-bones model the basic reproduction number is 
\[
\mathcal{R}_0 = \frac{\Lambda} {\mu} \frac{\beta \left[ c_E + q_R (\mu + d_E) \right]}
		{(\mu + d_E + c_E) (\mu + d_R + c_R) - c_R c_E}, 
\]
where $\mu$ is the mortality rate of the susceptible population, $ d_E $ and $ d_R $ are the additional mortality rates
of the extremists and recruiters, respectively, and $\beta$ is the mass action transmission coefficient of the ideology.  
(Other parameters are described in Section 2.)

One approach to dealing with extremism is to attempt to kill or remove extremists and recruiters, corresponding to
increasing the parameters $d_E$ and $d_R$.  Since $\mathcal{R}_0$ is a decreasing function of these two parameters,
this would in turn decrease $\mathcal{R}_0$.  Consequently, according to our model, these are potentially successful
strategies to counter violent extremism.  A different strategy consists of decreasing the parameter $\beta$ by bolstering
the resilience of communities to recruitment into radicalization and violent extremism.  This is, one aspect of the
``Countering Violent Extremisms" program of the U.S.  Department of Homeland Security \cite{Department2015Countering},
one aspect of the ``Prevent Strategy" \cite{Home2011Prevent} of the British government, and the ``Prevent" element of the
``Building Resilience Against Terrorism" strategy of the Government of Canada \cite{safety2013building}.

Another strategy we want to test consists of supporting an alternative, more moderate ideology that competes with the
terrorist ideology.  This is, for instance, another aspect of the ``Prevent" element of the ``Building Resilience Against Terrorism"
strategy of the Government of Canada \cite{safety2013building}.  In fact, one of the desired outcomes listed for the ``Prevent"
element \cite{safety2013building} is the following: ``Violent extremist ideology is effectively challenged by producing effective
narratives to counter it."  This approach is also advocated by Yacoubian and Stares \cite{yacoubian2005rethinking} and by
Post \cite{post2006countering}.

To study the effectiveness of this approach, in Section 3, we generalize our model to include two ideologies under
the assumption that individuals cannot move from one ideology to the other.  Based on how parameters are chosen,
we may have one moderate ideology and one extreme, or two extreme ideologies.  For this model there are two basic
reproduction numbers $\Rone$, associated with the first ideology, and $\Rtwo$ associated
with the second ideology.  We will prove that if both reproduction number are less than one then both ideologies
will eventually be eradicated.  If, however, one of the reproduction numbers is larger than the other, say
$\Rone > \Rtwo$, then the second ideology will be eliminated; that is, the number of recruiters and extremist
supporting the second ideology will eventually go to zero.  This can happen even if $\Rtwo>1 $.  This is a version
of the so called principle of competitive exclusion of ecology and epidemiology \cite{martcheva2015introduction}.
Our analysis suggests that introducing and supporting another ideology, in conjunction with the approaches we
mentioned above (i.e., targeting terrorists and recruiters and increasing the resilience of communities to radical
ideologies), may bolster the overall effectiveness of strategies to counter violent extremism.

Finally, in Section 4, we amend the two ideology model to take into account that individuals subscribing to one ideology
may change their mind and commit to another ideology, this is done by introducing a cross-interaction term.  We assume
that the flow of individuals is from ideology one to ideology two.  If we take the second ideology to be the moderate
one, we can use this cross interaction term to model de-radicalization.  By de-radicalization we mean, following Horgan
\cite{horgan2009walking}, the social and psychological process whereby an individual's commitment to, and involvement
in, violent radicalization is reduced to the extent that they are no longer at risk of involvement and engagement in violent
activity.  There are many factors that may contribute to de-radicalization, including, possibly, some of the programs developed
by the British, Canadian and U.S.  Governments \cite{Home2011Prevent,safety2013building,Department2015Countering}.
If, instead, we take the first ideology to be the moderate one, we can use the cross-interaction term to model individuals
subscribing to a moderate ideology that change their mind and become radicalized.



Naturally, we are interested in understanding how the presence of the cross-interaction term changes the results obtained
in Section 3.  There are two main changes.  First of all, it is possible to have a coexistence equilibrium, where both ideologies
persist.  This, of course, may be an unwelcome modification since our goal is to eradicate the more extreme of the ideologies.
If the coexistence equilibrium is locally asymptotically stable this makes eliminating either ideology difficult.  Second, recall
that in Section 3 the ideology with the larger reproduction number dominates, and the other ideology eventually vanishes.
With the cross-interaction term, even if $\Rone > \Rtwo$, it is possible that the second ideology can come to dominate.
This is because the cross-interaction term gives a competitive advantage to the second ideology.  The main lesson we take
away from this is that, based on our analysis of Section 3, introducing a second more moderate ideology may considerably
enhance other efforts to counter violent extremism.  If cross-interaction terms are taken into account, however, based on our
analysis of Section 4, such advantages may be reduced if the moderate ideology is the first one, and it becomes a pathway
to the second (more extreme) ideology.

\section{Bare-Bones Model}
\subsection{Equations}
We model the spread of extreme ideology as a contact process.  We use a compartmental model (similar to
that which is used to model epidemics) to describe the dynamics.  We assume that within the full population
there is a subpopulation that is potentially at risk of adopting the ideology.  (For example, for the ideology of
violent resistance to British rule in Northern Ireland during the late 20th century for the purpose of joining the
Republic of Ireland, the subpopulation that was potentially at risk of adopting the ideology would primarily
consist of the Catholic subpopulation of Northern Ireland.)
We further divide this subpopulation into three compartments:
\begin{itemize}
 \item susceptibles - $S$
 \item extremists - $E$
 \item recruiters - $R$.
\end{itemize}
The transfer diagram for the system appears in Figure \ref{fig:transfer_diagram}.

The susceptible class consists of individuals that have not adopted the extremist ideology.  The extremist group
includes violent adopters of the ideology that engage in terrorists acts.  The recruiters group includes those that
Barack Obama, former President of the United States of America defined as ``---the propagandists, recruiters
and enablers--- who may not directly engage in terrorist acts themselves, but who radicalize, recruit and incite
others to do so"  \cite{obama_2015}.


\tikzstyle{block} = [rectangle, draw, fill=blue!20, 
 text width=5em, text centered, rounded corners, minimum height=4em]
\tikzstyle{line} = [draw, -latex]
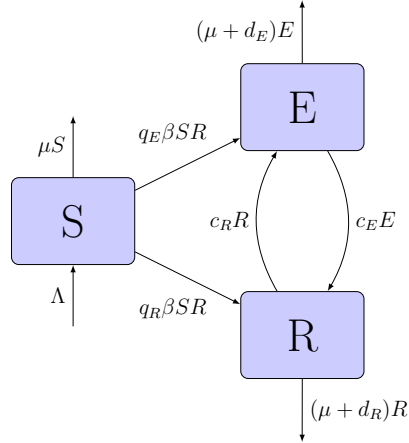
\begin{figure}[h] 
 \scalebox{0.7}{
\begin{tikzpicture}[node distance = 2cm,auto]
 \node [block] (S) {\Huge{S}}; 
 \node [block, below right=0.5cm and 2cm of S] (R) {\Huge{R}};
 \node [block, above right=0.5cm and 2cm of S] (E) {\Huge{E}};
 
 \path [line] (S) --node [below left, near end ]{$ q_R \beta SR $} (R);
 \path [line] (S) -- node [above left,near end ]{$q_E \beta S R $} (E); 
 \path [line] (S) -- node{$ \mu S $} (0,2);
 
 \path [line] (R) edge [bend left] node [left]{$c_R R $} (E);
 \path [line] (E) edge [bend left] node [right]{$c_E E $} (R);
 \path [line] (0,-2) -- node{$ \Lambda $} (S);

 \path [line] (R) -- node [] {$(\mu + d_R) R $} (4.35,-4.2);
 \path [line] (E) -- node [] {$(\mu + d_E) E $} (4.35,4.2);
\end{tikzpicture}
}
\caption{Transfer diagram for the bare-bones model.  It shows the progression of radicalization from the Susceptible
$(S)$ class to the extremist $(E)$, and recruiter $(R)$ classes.  \label{fig:transfer_diagram}}
\end{figure}

New individuals enter the population into the susceptible group at the constant rate $\Lambda$.
%
The per capita death rate for natural causes is $\mu$, so that the average life expectancy is $1/ \mu$.
Extremists and recruiters have additional removal rates, due to being captured or killed, with rate constants
$ d_E $ and $ d_R $, respectively.


We assume that susceptibles and recruiters interact according to a mass action law, and that the rate at
which susceptibles are recruited to adopt the extremist ideology is proportional to the number of interations
that are occuring.  Thus, susceptibles are recruited at rate $\beta S R$, with a fraction $q_E$
entering the extremist class and a fraction $q_R = 1 - q_E$ entering the recruiter class.

Extremists switch to the recruiter class with rate constant $ c_E $, while recruiters enter the extremist
class with rate constant $ c_R $.  
Thus, the radicalization model consists of the following differential equations together with
non-negative initial conditions:
\begin{equation}\label{eqn:model}
 \begin{aligned}
 S' & = \Lambda - \mu S - \beta SR		\\
 E' & = q_E\beta SR - (\mu + d_E + c_E) E + c_R R		\\
 R' & = q_R \beta SR + c_E E - (\mu + d_R + c_R) R, 
 \end{aligned}
\end{equation}
where $ q_E + q_R = 1 $, $ q_E, q_R \in [0,1] $ and all other parameters are positive.

Note that although the problem we are addressing is similar to the one studied in \cite{castillo2003models},
the model considered in \cite{castillo2003models} does not include the recruiters.

\begin{proposition}\label{prop:invariant_region} 
 Under the flow described by \eqref{eqn:model}, the region
 \[
 \Delta = \left \{ (S, E, R) \in \mathbb{R}^3_{\geq 0} : S+E+I \leq \frac{\Lambda} {\mu} \right \}
 \]
 is positively invariant and attracting within $\mathbb{R}_{\ge 0}^3$
 (i.e.  all solutions starting in $ \Delta $ remain in $ \Delta $ for all $ t > 0 $, while solutions starting outside $\Delta$ either
 enter or approach in the limit).
\end{proposition}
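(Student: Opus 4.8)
The plan is to reduce everything to a single scalar differential inequality for the total population $N := S + E + R$, after first confirming that the non-negative octant is itself invariant, so that all three variables may legitimately be treated as non-negative throughout the argument.

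First I would check that $\mathbb{R}^3_{\geq 0}$ is positively invariant by inspecting the vector field on each coordinate face. On $\{S = 0\}$ one has $S' = \Lambda > 0$; on $\{E = 0\}$ one has $E' = q_E \beta SR + c_R R \geq 0$; and on $\{R = 0\}$ one has $R' = c_E E \geq 0$. Thus the flow never points strictly outward across a boundary face of the octant, so no trajectory starting in $\mathbb{R}^3_{\geq 0}$ can leave it.

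The central step is to add the three equations of \eqref{eqn:model}. Because $q_E + q_R = 1$, the three transmission terms $-\beta SR$, $q_E \beta SR$ and $q_R \beta SR$ cancel, and the exchange terms $\pm c_E E$ and $\pm c_R R$ cancel as well, leaving
\[
N' = \Lambda - \mu S - (\mu + d_E) E - (\mu + d_R) R.
\]
Since $d_E, d_R \geq 0$ and the variables are non-negative, this gives $N' \leq \Lambda - \mu N$. From this inequality both conclusions follow. For positive invariance of $\Delta$, I would note that on the capping face $\{N = \Lambda/\mu\}$ the inequality yields $N' \leq \Lambda - \mu \cdot (\Lambda/\mu) = 0$, so no trajectory can cross outward there; combined with octant invariance this confines any solution starting in $\Delta$ to $\Delta$. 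For the attracting property I would compare $N$ with the solution $u(t) = \Lambda/\mu + (N(0) - \Lambda/\mu) e^{-\mu t}$ of $u' = \Lambda - \mu u$, which satisfies $u(0) = N(0)$ and decreases monotonically to $\Lambda/\mu$; the comparison principle then gives $N(t) \leq u(t)$, hence $\limsup_{t \to \infty} N(t) \leq \Lambda/\mu$.

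I do not expect a genuine obstacle here, as this is the standard boundedness argument for compartmental models. The one point needing care is the attracting claim: rather than attempting to solve the nonlinear system directly, one must route through the scalar comparison equation, and one should phrase the conclusion as $\limsup N \leq \Lambda/\mu$ so that it covers both alternatives in the proposition (a solution may enter $\Delta$ in finite time, or merely approach it asymptotically without ever crossing in).
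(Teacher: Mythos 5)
Your proof is correct and follows essentially the same route as the paper's: checking the vector field on the coordinate faces to get invariance of $\mathbb{R}^3_{\geq 0}$, summing the equations to obtain $T' \leq \Lambda - \mu T$, and comparing with the scalar linear equation to get both invariance of $\Delta$ and $\limsup_{t\to\infty} T(t) \leq \Lambda/\mu$. The only cosmetic difference is that the paper deduces positive invariance of $\Delta$ directly from the explicit solution of the comparison equation, whereas you add a separate subtangentiality argument on the face $\{T = \Lambda/\mu\}$; both are standard and equivalent here.
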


\begin{proof}
 We check the direction of the vector field along the boundary of $ \Delta $.  Along $ S = 0 $ we have 
 $ S' = \Lambda >0$ so the vector field points inwards.  Along $ E = 0 $ we have $ E' = q_E \beta S R + c_R R \geq 0 $
 provided $ R, S \geq 0 $.  Similarly, along $ R = 0 $ we have $ R' = c_E E \geq 0 $ provided $ E \geq 0 $.
 It follows from \cite[Proposition 2.1]{Haddad+2010} that $\mathbb{R}_{\ge 0}^3$ is positively invariant.
 
 Let $T = S + E + R$.  Then
 \[
 T' = S' + E' + R' = \Lambda - \mu T - d_R R - d_E E\leq \Lambda - \mu T.
\]
Using a standard comparison theorem, it follows that 
\begin{equation}				\label{solution for T}
 T (t) \leq \left( T (0) - \frac{\Lambda}{\mu} \right) e^{ - \mu t} + \frac{\Lambda} {\mu}.
\end{equation}
for $t \ge 0$.
Thus, if $ T (0) \leq \frac{\Lambda} {\mu} $, then $ T (t) \leq \frac{\Lambda} {\mu} $ for all $t \ge 0$.
Hence, the set $ \Delta $ is positively invariant.
Furthermore, it follows from \eqref{solution for T}, that $\limsup_{ t \to \infty} T \leq \frac{\Lambda} {\mu}$,
demonstrating that $\Delta$ is attracting within $\mathbb{R}_{\ge 0}^3$.
\end{proof}

\subsection{Equilibrium Points}\label{ssec:eq_points}

There are no equilibria with $S$ equal to zero.  The only equilibrium with either $E$ or $R$ equal to
zero is $x_0 = \left( S_0 ,0, 0 \right) = \left( \frac{\Lambda} {\mu} ,0, 0 \right)$.  To find other equilibria
$x^* = (S^*, E^*, R^*) \in \R_{>0}^3$, we first set $E ' = R ' = 0$, treating $S^*$ as a parameter.
This gives the linear system 
\begin{equation}\label{eqn:matrix} 
 \begin{aligned}
 \begin{bmatrix}
 - (\mu + d_E + c_E) &  q_E\beta S^* + c_R \\
 c_E & q_R\beta S^* - (\mu + d_R + c_R )
 \end{bmatrix} \begin{bmatrix}
 E^* \\
 R^* 
\end{bmatrix} = \begin{bmatrix}
 0 \\
 0 
 \end{bmatrix} 
 \end{aligned} 
 \end{equation} 
In order to have non-zero solutions for $E^*$ and $R^*$, the coefficient matrix must be singular and have
determinant zero.  Recalling that $q_E + q_R = 1$, this gives 
\begin{equation}		\label{S^*}
 S^* = \frac{ ( \mu + d_E + c_E) (\mu + d_R + c_R) - c_E c_R} {\beta[ c_E + q_R (\mu + d_E)]}
 = \frac{1} {\Gamma},
\end{equation}
where
\[
\Gamma = \frac{\beta [ c_E + q_R (\mu + d_E)]}{ ( \mu + d_E + c_E) (\mu + d_R + c_R) - c_E c_R}.
\]
With this value of $S^*$, there is a infinite number of solutions to \eqref{eqn:matrix}.  To find a positive
equilibrium, we take the first line of \eqref{eqn:model} and solve for $R^*$ obtaining
\[
 R^* = \frac{\Lambda - \mu S^*} {\beta S^*} = \frac{\mu} {\beta} \left( \frac{\Lambda \Gamma} {\mu} - 1 \right),
\]
which is positive for $\mathcal{R}_0 = \frac{\Lambda \Gamma} {\mu} >1 $, where $\mathcal{R}_0$
is the basic reproduction number that we will calculate in the next section.

Adding $E'$ and $R'$ at an equilibrium gives 
\[
0 = \beta S^* R^* - (\mu + d_E) E^* - (\mu + d_R) R^*
\]
Isolating $E^*$, and noting that $\beta S^* = \frac{\beta}{\Gamma}$, we find
\begin{equation}				\label{E^*}
 E^* = \frac{\frac{\beta} {\Gamma} - (\mu + d_R)} {\mu + d_E} R^*.
\end{equation}
Note that
\begin{equation*}
\Gamma
< \frac{\beta [ \mu + d_E + c_E ]} { (\mu + d_E + c_E) (\mu + d_R )}
 = \frac{\beta} {\mu + d_R}.  
\end{equation*} 
Combining this observation with \eqref{E^*}, we see that $E^*$ and $R^*$ have the same sign (which depends
on the sign of $\Rzero-1$).

\begin{theorem}
If $\Rzero < 1$, then the only equilibrium is $x_0 = \left( \frac{\Lambda} {\mu} ,0, 0 \right)$.
If $\Rzero > 1$, then there are two equilibria: $x_0$ and $x^* = (S^*, E^*, R^*) \in \R_{>0}^3$,
\end{theorem}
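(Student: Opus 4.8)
The plan is to assemble the theorem from the equilibrium computations already carried out in the preceding subsection, which have reduced the entire question to tracking the sign of a single quantity. First I would recall the boundary dichotomy established there: no equilibrium can have $S=0$, and the only equilibrium with $E=0$ or $R=0$ is $x_0=\left(\frac{\Lambda}{\mu},0,0\right)$. Since $x_0$ exists for every parameter choice, what remains is to decide when a second, fully interior equilibrium with $S,E,R>0$ exists, and to confirm that when it does exist it is unique.

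Next I would establish uniqueness of any interior equilibrium by observing that each coordinate is forced. Imposing $E'=R'=0$ and requiring a nontrivial solution of the homogeneous system~\eqref{eqn:matrix} makes its coefficient matrix singular, which determines $S^*=1/\Gamma$ uniquely through~\eqref{S^*}. Feeding this value into $S'=0$ then determines $R^*=\frac{\mu}{\beta}\left(\frac{\Lambda\Gamma}{\mu}-1\right)$ uniquely, and~\eqref{E^*} in turn determines $E^*$. Hence there is at most one interior equilibrium, so uniqueness will follow automatically once existence is settled.

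The final step is the sign analysis. The expression for $R^*$ is strictly positive exactly when $\frac{\Lambda\Gamma}{\mu}=\Rzero>1$, and nonpositive otherwise; since it has already been shown that $E^*$ and $R^*$ carry the same sign, both are strictly positive precisely when $\Rzero>1$. I would then conclude: if $\Rzero<1$ the candidate interior point has negative coordinates and is discarded, leaving $x_0$ as the only equilibrium; if $\Rzero>1$ the candidate lies in $\R_{>0}^3$, giving exactly the two equilibria $x_0$ and $x^*$.

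I do not expect a serious obstacle, since the computational heart of the argument is already in place. The one point needing care is the uniqueness claim: the singular system~\eqref{eqn:matrix} by itself admits a one-parameter family of solutions $(E^*,R^*)$, and it is only the additional constraint $S'=0$ that collapses this family to a single point. Making that collapse explicit is what turns \emph{existence of a positive equilibrium} into \emph{exactly two equilibria} rather than a continuum.
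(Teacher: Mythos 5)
Your proposal is correct and takes essentially the same route as the paper: the singularity of the coefficient matrix in \eqref{eqn:matrix} pins down $S^* = 1/\Gamma$, the equation $S'=0$ then pins down $R^*$, the relation \eqref{E^*} pins down $E^*$, and the sign of $\Rzero - 1$ (shared by $E^*$ and $R^*$) decides existence of the interior equilibrium. The only difference is that you make the uniqueness argument explicit, which the paper leaves implicit in its computation.
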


\subsection{The basic reproduction number $\mathcal{R}_0$}
The basic reproduction number $\mathcal{R}_0$ is the spectral radius of the next generation matrix $N$,
and can be calculated as follows (see \cite{van2002reproduction} for a full description of the method).
The next generation matrix is given by $ N = F V^{ - 1}$ with 
\[
 F = \begin{bmatrix}
 \frac{\partial \mathcal{F}_E} {\partial E} & \frac{\partial \mathcal{F}_E} {\partial R} \\[1em]
 \frac{\partial \mathcal{F}_R} {\partial E} & \frac{\partial \mathcal{F}_R} {\partial R} 
 \end{bmatrix} (x_0)
 \aand
 V = \begin{bmatrix}
 \frac{\partial \mathcal{V}_E} {\partial E} & \frac{\partial \mathcal{V}_E} {\partial R} \\[1em]
 \frac{\partial \mathcal{V}_R} {\partial E} & \frac{\partial \mathcal{V}_R} {\partial R} 
 \end{bmatrix} (x_0).
\]
Here, $\mathcal{F}_E $ and $\mathcal{F}_R $ are the rates of appearance of newly radicalized individuals
in the classes $ E $ and $ R $, respectively.  The movement between $E$ and $R$ is given by $\mathcal{V}_E $
and $\mathcal{V}_R $.  In our case 
\[
 \mathcal{F} = \begin{bmatrix}
 \mathcal{F}_E \\
 \mathcal{F}_R 
 \end{bmatrix} = 
 \begin{bmatrix}
 q_E \beta SR \\
 q_R \beta SR 
 \end{bmatrix} 
\aand
\mathcal{V} = \begin{bmatrix}
 \mathcal{V}_E \\
 \mathcal{V}_R 
 \end{bmatrix} = 
 \begin{bmatrix}
 (\mu + d_E + c_E) E - c_R R \\
  - c_E E + (\mu + d_R + c_R) R
 \end{bmatrix},
\]
so that $E' = \mathcal{F}_E - \mathcal{V}_E$ and $R' = \mathcal{F}_R - \mathcal{V}_R$.
Hence
\[
 F =
 \beta S_0 \begin{bmatrix}
 0& q_E \\
 0 & q_R 
 \end{bmatrix}
\aand
 V = \begin{bmatrix}
 (\mu +d_E + c_E) & - c_R \\
 - c_E & (\mu + d_R + c_R)
 \end{bmatrix}.
\]
We now have
\[
 N = F V^{-1} = \frac{\beta S_0}{(\mu + d_E + c_E) (\mu + d_R + c_R) - c_R c_E} 
 \begin{bmatrix}
 0 & q_E \\
 0& q_R
 \end{bmatrix}
 \begin{bmatrix}
 (\mu +d_R + c_R) & c_R \\
 c_E & (\mu + d_E + c_E) 
 \end{bmatrix} 
\]
Note that $F$ has rank $1$ and so the same is true of $N$.  Since one eigenvalue of $N$ is
zero the spectral radius is equal to the absolute value of the remaining eigenvalue.  Since the
trace is equal to the sum of the eigenvalues and there is only one non-zero eigenvalue, we
see that the spectral radius of $N$ is equal to the absolute value of the trace (which happens
to be positive).  Thus,
\begin{equation}				\label{Rzero}
\mathcal{R}_0 = S_0 \frac{\beta [ c_E + q_R (\mu + d_E)]}{ ( \mu + d_E + c_E) (\mu + d_R + c_R) - c_E c_R}
 = \frac{\Lambda} {\mu} \Gamma.
\end{equation}
The following result follows immediately from \cite[Theorem 2]{van2002reproduction}.

\begin{theorem}
If $\Rzero < 1$, then $x_0$ is locally asymptotically stable.  If $\Rzero > 1$, then $x_0$ is unstable.
\end{theorem}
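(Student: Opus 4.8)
The plan is to establish both claims by linearizing the system \eqref{eqn:model} at $x_0$ and examining the spectrum of the Jacobian. First I would compute the Jacobian of the right-hand side of \eqref{eqn:model} and evaluate it at $x_0 = (\Lambda/\mu, 0, 0)$. Since both $E$ and $R$ vanish there, the derivatives of the transmission terms $\beta S R$ contribute nothing to the $S$-column of the $E'$ and $R'$ rows, so the Jacobian is block lower-triangular: the $S$-equation supplies the isolated diagonal entry $-\mu$, while the $E$- and $R$-equations supply the $2\times2$ block
\[
M = F - V = \begin{bmatrix} -(\mu + d_E + c_E) & q_E\beta S_0 + c_R \\ c_E & q_R\beta S_0 - (\mu + d_R + c_R)\end{bmatrix},
\]
with $F$ and $V$ exactly as in the next-generation computation. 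By triangularity one eigenvalue is $-\mu < 0$ and the remaining two are the eigenvalues of $M$, so the stability of $x_0$ is governed entirely by $M$.

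Next I would apply the trace--determinant criterion to $M$. A direct expansion of $\det M$, using $q_E + q_R = 1$ to collapse the transmission entries, gives
\[
\det M = D - \beta S_0\bigl[c_E + q_R(\mu + d_E)\bigr], \qquad D := (\mu + d_E + c_E)(\mu + d_R + c_R) - c_E c_R .
\]
One checks that $D = (\mu+d_E)(\mu+d_R) + (\mu+d_E)c_R + c_E(\mu+d_R) > 0$, and comparing with \eqref{Rzero} yields the clean identity $\det M = D(1 - \Rzero)$. When $\Rzero > 1$ this determinant is negative, so $M$ has two real eigenvalues of opposite sign; the positive one makes $x_0$ a saddle, hence unstable, and this half of the theorem is finished with no further work.

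For the case $\Rzero < 1$ I would show that both eigenvalues of $M$ have negative real part, which for a $2\times2$ matrix amounts to $\det M > 0$ together with $\operatorname{tr} M < 0$. Positivity of the determinant is immediate from the identity above. The trace is $\operatorname{tr} M = q_R\beta S_0 - (\mu + d_E + c_E) - (\mu + d_R + c_R)$, and I expect this sign condition to be the main obstacle, since the trace is not negative for all parameter values but only once $\Rzero < 1$ is imposed. The plan is to substitute the bound $\beta S_0 < D/[c_E + q_R(\mu+d_E)]$ supplied by $\Rzero < 1$ and reduce the target inequality $q_R\beta S_0 < (\mu+d_E+c_E)+(\mu+d_R+c_R)$ to a polynomial inequality in the rate constants; after clearing the (positive) denominator, every surviving term carries a nonnegative coefficient, the only delicate factor being $q_E = 1 - q_R \ge 0$, so the inequality holds. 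Both eigenvalues of $M$ then lie in the open left half-plane and $x_0$ is locally asymptotically stable.

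As a shorter alternative to the explicit trace estimate, I would observe that $V$ is a nonsingular $M$-matrix and $F \ge 0$, so the statement is an instance of the general equivalence between $\operatorname{sign}(\Rzero - 1)$ and $\operatorname{sign}\bigl(s(F-V)\bigr)$ established in \cite{van2002reproduction}; invoking \cite[Theorem~2]{van2002reproduction} bypasses the trace computation entirely, at the cost of importing that machinery rather than keeping the argument self-contained.
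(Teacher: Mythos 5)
Your argument is correct, but it takes a genuinely different route from the paper's. The paper proves this theorem in a single line: having already assembled $F$ and $V$ for the next-generation computation, it simply invokes \cite[Theorem 2]{van2002reproduction}, which asserts that the equilibrium $x_0$ is locally asymptotically stable when the spectral radius of $FV^{-1}$ is less than one and unstable when it exceeds one. That is exactly the ``shorter alternative'' in your final paragraph, so your closing remark is, in effect, the paper's entire proof. Your primary route instead verifies the spectral claim by hand: the Jacobian at $x_0$ has first column $(-\mu,0,0)^{T}$ (so it is block \emph{upper} triangular rather than lower triangular as you wrote --- a harmless slip), its spectrum is $\{-\mu\}$ together with the eigenvalues of $M=F-V$, and the trace--determinant criterion finishes the job. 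Your key identity $\det M = D(1-\Rzero)$ is correct (it uses $q_E+q_R=1$), the case $\Rzero>1$ is immediate since a negative determinant forces real eigenvalues of opposite sign, and your trace estimate for $\Rzero<1$ does close: writing $\ccc = c_E+q_R(\mu+d_E)$, the required inequality $q_R\beta S_0 < (\mu+d_E+c_E)+(\mu+d_R+c_R)$ follows from $\beta S_0\,\ccc < D$ because
\[
\bigl[(\mu+d_E+c_E)+(\mu+d_R+c_R)\bigr]\ccc - q_R D
= (\mu+d_E+c_E)c_E + q_R(\mu+d_E)(\mu+d_E+c_E) + c_E\bigl[q_E(\mu+d_R)+c_R\bigr] > 0,
\]
where, as you anticipated, the only sign issue is $q_E = 1-q_R \ge 0$. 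What your computation buys is a self-contained, elementary proof that additionally exhibits the saddle structure of $x_0$ when $\Rzero>1$ and the explicit factorization of the characteristic polynomial; what the paper's citation buys is brevity and uniformity (the same theorem is reused for the multi-ideology models in Sections 3 and 4), at the price of importing the hypotheses (A1)--(A5) of \cite{van2002reproduction}, which must be --- and easily are --- checked for this model.
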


\subsection{Global stability of $x_0$ for $\Rzero \le 1$}
For the calculations of this section and later sections, it is convenient to define
\[
\ccc = c_E + q_R (\mu + d_E)
\aand
D = (\mu + d_R + c_R) (\mu + d_E + c_E) - c_E c_R.
\]
This allows us, for instance, to write $\Rzero = \frac{\beta \Lambda \ccc}{\mu D}$.
We will now study the global stability of $x_0$ by taking the Lyapunov function 
\[
U = S_0 g \left( \frac{S}{S_0} \right) + \frac{c_E}{\ccc} E + \frac{\mu + d_E + c_E}{\ccc} R,
\]
where
\begin{equation}		\label{g}
g (x) = x - 1 - \ln (x).
\end{equation}
Note that $\left. S' \right|_{S=0} = \Lambda > 0$.  Combining this with the fact that $\Delta$ is compact,
positively invariant and attracting, it follows that $\liminf S(t) > 0$, and so we may assume that $S>0$.
Thus, the singularity in $g$ at $0$ is not a problem.

Differentiating $U$, and recalling that $\Lambda = \mu S_0$, we find
\begin{align*}
 U' & = \left( 1 - \frac{S_0}{S} \right) S' + \frac{c_E}{\ccc} E' + \frac{\mu + d_E + c_E}{\ccc} R'		\\
 & = \left( 1 - \frac{S_0}{S} \right) \left( \Lambda - \mu S - \beta S R \right)
 		+ \frac{c_E}{\ccc} \left( q_E \beta S R - \left( \mu + d_E + c_E \right) E + c_R R \right)	\\
 & \hspace{4.0cm} + \frac{\mu + d_E + c_E}{\ccc} \left( q_R \beta S R + c_E E - \left( \mu + d_R + c_R \right) R \right)	\\
 & = \left( 1 - \frac{S_0}{S} \right) \left( \mu S_0 - \mu S\right)
 		+ \beta S_0 R + \beta S R \left[ - 1 + \frac{q_E c_E + q_R (\mu + d_E + c_E)}{\ccc} \right]		\\
 & \hspace{4.0cm} + \frac{c_E c_R - (\mu + d_E + c_E) \left( \mu + d_R + c_R \right)}{\ccc} R	\\
 & = - \mu \frac{\left( S - S_0\right)^2}{S} + \left[ \beta S_0 - \frac{D}{\ccc} \right] R	\\
 & = - \mu \frac{\left( S - S_0\right)^2}{S} + \frac{D}{\ccc} \left[ \frac{\beta \Lambda \ccc}{\mu D} - 1 \right] R	 \\
 & = - \mu \frac{\left( S - S_0\right)^2}{S} + \frac{D}{\ccc} \left[ \Rzero - 1 \right] R	 \\
\end{align*}
For $\Rzero \le 1$, we have $U ' \le 0 $ with equality only if $S = S_0$.  However, the largest invariant set
for which $S$ remains constant at $S_0$ consists of just the equilibrium $x_0$.  Thus, by LaSalle's Invariance
Principle, we obtain the following theorem.

\begin{theorem}\label{thm:stable0} 
 If $ \Rzero \le 1 $, then $x_0$ is globally asymptotically stable on $\mathbb{R}^3_{\geq 0}$.  
\end{theorem}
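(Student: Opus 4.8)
The plan is to read off global asymptotic stability from the Lyapunov function $U$ constructed above together with LaSalle's invariance principle; the decisive computation of $U'$ is already in hand, so the real task is to verify that $U$ qualifies as a Lyapunov function, to identify the largest invariant set on which $U'$ vanishes, and to upgrade the resulting convergence to a statement valid on the whole orthant. First I would check that $U$ is positive definite relative to $x_0$: since $g(x)=x-1-\ln x\ge 0$ with equality only at $x=1$, and since the constants $\frac{c_E}{\ccc}$ and $\frac{\mu+d_E+c_E}{\ccc}$ are positive while $E,R\ge 0$, the function $U$ is nonnegative on $\R^3_{\ge 0}\cap\{S>0\}$ and vanishes exactly at $x_0=(S_0,0,0)$. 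The observation that $S'=\Lambda>0$ on $\{S=0\}$ together with the fact that $\Delta$ is attracting (\propref{prop:invariant_region}) gives $\liminf_{t\to\infty}S(t)>0$, so the logarithmic singularity never obstructs the argument along forward orbits.

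Next I would use the already-computed derivative
\[
U' = -\mu\frac{(S-S_0)^2}{S} + \frac{D}{\ccc}\left[\Rzero-1\right]R,
\]
and note that for $\Rzero\le 1$ each term is nonpositive, so $U'\le 0$, with $U'=0$ forcing $S=S_0$. I would then compute the largest invariant subset of $\{U'=0\}$ directly from the equations: on an invariant trajectory with $S\equiv S_0$ one has $S'\equiv 0$, and because $\Lambda=\mu S_0$ the $S$-equation reduces to $\beta S_0R=0$, whence $R\equiv 0$; substituting $R\equiv 0$ into the $R$-equation leaves $R'=c_E E$, so invariance forces $E\equiv 0$. Hence the only invariant set contained in $\{U'=0\}$ is $\{x_0\}$, and by LaSalle's invariance principle every forward orbit with compact closure converges to $x_0$.

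The step I expect to need the most care is passing from this to global asymptotic stability on all of $\R^3_{\ge 0}$, since LaSalle requires precompact trajectories and a domain on which $U$ is defined and nonincreasing. I would supply both ingredients from \propref{prop:invariant_region}: every solution either lies in the compact positively invariant set $\Delta$ or is attracted to it, so each forward orbit is bounded and its $\omega$-limit set is a nonempty, compact, invariant subset of $\Delta$; moreover $\liminf S>0$ keeps the $\omega$-limit set away from the singular face $\{S=0\}$, so $U$ is admissible there. Applying the invariant-set analysis to the $\omega$-limit set pins it to $\{x_0\}$ for arbitrary initial data in $\R^3_{\ge 0}$, which, together with the local Lyapunov stability furnished by $U$, yields global asymptotic stability. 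The cases $\Rzero<1$ and $\Rzero=1$ are then handled uniformly, since the invariant-set computation on $\{S=S_0\}$ forces $R=E=0$ regardless of which term in $U'$ happens to vanish.
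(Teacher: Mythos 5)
Your proof is correct and follows essentially the same route as the paper: the same Lyapunov function $U$, the same expression for $U'$, and LaSalle's invariance principle with the largest invariant subset of $\{S=S_0\}$ identified as $\{x_0\}$. The only difference is that you spell out details the paper leaves implicit — the positive definiteness of $U$, the explicit computation that invariance on $\{S=S_0\}$ forces $R\equiv 0$ and then $E\equiv 0$, and the boundedness/$\omega$-limit-set bookkeeping needed to apply LaSalle on all of $\mathbb{R}^3_{\geq 0}$.
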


\subsection{Global stability of $x^*$ for $\Rzero > 1$}

\begin{theorem}\label{thm:xast_stability}
 If $\Rzero > 1$, then the equilibrium $ x^* $ is globally asymptotically stable on $\mathbb{R}^3_{> 0} $.  
\end{theorem}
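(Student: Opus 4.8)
The plan is to establish global asymptotic stability of the endemic equilibrium $x^*$ by means of a Goh--Volterra (logarithmic) Lyapunov function, centred now at $x^*$ rather than at $x_0$. Concretely, I would take
\[
V = S^* g\!\left(\frac{S}{S^*}\right) + a\, E^* g\!\left(\frac{E}{E^*}\right) + b\, R^* g\!\left(\frac{R}{R^*}\right),
\]
with $g$ as in \eqref{g} and positive weights $a,b$ to be determined. Since $g(\xi)\ge 0$ with equality only at $\xi=1$, $V$ is positive definite about $x^*$; and because $\Delta$ is compact, positively invariant and attracting while the vector field points inward along the coordinate planes, solutions from the interior remain in the interior, so the singularities of the logarithms cause no trouble.

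Next I would differentiate $V$ along \eqref{eqn:model} and eliminate the constants using the three equilibrium identities $\Lambda = \mu S^* + \beta S^* R^*$, $(\mu+d_E+c_E)E^* = q_E\beta S^*R^* + c_R R^*$, and $(\mu+d_R+c_R)R^* = q_R\beta S^*R^* + c_E E^*$. Writing $x = S/S^*$, $y=E/E^*$, $z=R/R^*$, the $S$-terms contribute the manifestly nonpositive quantity $-\mu (S-S^*)^2/S$ together with transmission terms, while the $E$- and $R$-terms supply the remaining monomials in $x,y,z$ and their reciprocals.

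The heart of the argument is the choice of $a,b$. The obstacle, absent from the usual one-directional SEIR analysis, is the two-way exchange between $E$ and $R$ carried by $c_E$ and $c_R$: these produce the cross-ratios $z/y$ and $y/z$, and it is not a priori clear they can be absorbed. Setting $A = q_E\beta S^*R^*$, $B = c_R R^*$, $C = q_R\beta S^*R^*$, $D_0 = c_E E^*$, I would impose the two linear conditions $a(A+B) = bD_0$ and $aA + bC = A + C$. A short computation shows that these force the coefficients of $xz$, $y$ and $z$ in $V'$ to vanish simultaneously, and that the surviving terms regroup exactly as
\[
V' = -\mu\frac{(S-S^*)^2}{S} - aA\left[g\!\left(\tfrac1x\right)+g\!\left(\tfrac{xz}{y}\right)+g\!\left(\tfrac yz\right)\right] - bC\left[g\!\left(\tfrac1x\right)+g(x)\right] - aB\left[g\!\left(\tfrac zy\right)+g\!\left(\tfrac yz\right)\right],
\]
where each bracketed cycle has argument-product equal to $1$, hence is nonnegative. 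Solving the $2\times 2$ system gives $a = (A+C)D_0/\bigl(C(A+B)+AD_0\bigr)$ and the corresponding $b$; I would then check $a,b>0$ (using $B,D_0>0$ and $q_E+q_R=1$, so that at least one of $A,C$ is positive), which guarantees $V'\le 0$.

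Finally I would invoke LaSalle's Invariance Principle. Equality $V'=0$ forces $S=S^*$ and $E/E^*=R/R^*$; on the largest invariant subset of $\{V'=0\}$ one has $S\equiv S^*$, whence $S'\equiv0$ gives $R\equiv R^*$ and then $E\equiv E^*$, so the only invariant set is $\{x^*\}$. Combined with the positive invariance of $\mathbb{R}^3_{>0}$ and the attractivity of $\Delta$, this yields global asymptotic stability of $x^*$ on $\mathbb{R}^3_{>0}$. The only real risk in carrying this out is bookkeeping: verifying that the two weight conditions annihilate every stray monomial and that the leftover terms assemble into the stated closed cycles; the positivity of the weights, as noted, then follows cleanly.
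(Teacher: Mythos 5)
Your proposal is correct and follows essentially the same route as the paper: the same Goh--Volterra function $S^* g(S/S^*) + aE^*g(E/E^*) + bR^*g(R/R^*)$, and your two weight conditions $a(A+B)=bD_0$ and $aA+bC=A+C$ are exactly the paper's conditions $Aq_E+Bq_R=1$ and $Bc_EE^*=Ac_RR^*+Aq_E\beta S^*R^*$ in disguise, leading to the identical decomposition of $V'$ (you phrase the nonnegative brackets as $g$-cycles with argument-product $1$, the paper as arithmetic--geometric mean inequalities) and the same LaSalle conclusion.
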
 

\begin{proof} We study the global stability of $ x^* $ by considering the Lyapunov function

 \[
 W = S^* g \left( \frac{ S} { S^*} \right)+AE^* g \left( \frac{ E} { E^*} \right)
 		+BR^* g \left( \frac{ R} { R^*} \right)
\]
where $g$ is given in \eqref{g}, and the constants $A$ and $B$ are given below.  Differentiating $W$ along
solutions of \eqref{eqn:model} gives 
\begin{align*} 
 W' = & \left( 1 - \frac{ S^*} { S} \right) S' + A \left( 1 - \frac{ E^*} { E} \right) E' + B \left( 1 - \frac{ R^*} { R} \right) R'\\
 = & \left( 1 - \frac{ S^*} { S} \right) \left[ \Lambda - \mu S - \beta SR	\right]+ A \left( 1 - \frac{ E^*} { E} \right) \left[ q_E\beta SR - (\mu + d_E + c_E) E + c_R R	\right] \\
 &\hspace{1.0cm} + B \left( 1 - \frac{ R^*} { R} \right) \left[ q_R \beta SR + c_E E - (\mu + d_R + c_R) R \right].
\end{align*} 
Note that by considering Equation \eqref{eqn:model} at $x^*$ we have
\begin{align*}
 \Lambda & = \mu S^* + \beta S^* R^*					\\ 
 (\mu + d_E + c_E) E^* & = q_E \beta S^* R^* + c_R R^*		\\
 (\mu + d_R + c_R)R^* & = q_R \beta S^* R^* + c_E E^*
\end{align*}
Using these relationships to rewrite $W'$ yields 
\begin{equation*} 
 \begin{aligned}
 W' = & \left( 1 - \frac{ S^*} { S} \right)\left[ \mu S^* \left( 1 - \frac{ S} { S^*} \right)
 				+ \beta S^* R^* \left( 1 - \frac{ S R} { S^* R^*}\right) \right]\\
 & + A \left( 1 - \frac{ E^*}{E} \right) \left[ q_E \beta S^* R^* \left( \frac{ S R} { S^* R^*} - \frac{ E} { E^*}\right)
 				+ c_{ R} R^* \left( \frac{ R} { R^*} - \frac{ E} { E^*} \right) \right]\\
 & + B \left( 1 - \frac{ R^*} { R} \right) \left[q_R \beta S^* R^* \left( \frac{ SR} { S^* R^*} - \frac{ R} { R^*} \right)
 				+ c_E E^* \left( \frac{ E} { E^*} - \frac{ R} { R^*} \right) \right]
 \end{aligned} 
\end{equation*} 
To make the equations more concise, we define $s = \frac{S}{S^*}$, $e = \frac{E}{E^*}$ and $r = \frac{R}{R^*}$.
Then
\begin{equation*}
 \begin{aligned}
 W' = & \mu S^* \left[ 2 - s - \frac{1}{s} \right] + A c_R R^* \left[ r - e - \frac{r} {e} + 1 \right]
 											+ B c_E E^* \left[ e - r - \frac{e} {r} + 1 \right]	\\
 	& + \beta S^* R^* \left[ \left( 1 - \frac{1} {s} - s r + r \right) + A q_E \left( s r - e - \frac{s r} {e} + 1 \right)
											+ B q_R \left( s r - r - s + 1 \right) \right].
 \end{aligned} 
\end{equation*}
We now choose $A, B > 0$ so that $ A q_E + B q_R = 1 $, and $ B c_E E^* = A c_R R^* + A q_E \beta S^* R^* $.
The first of these conditions allows us to distribute the term $\left( 1 - \frac{1} {s} - s r + r \right)$ into the two terms
with coefficients $A q_E$ and $B q_R$.  The second condition allows us to replace $B c_E E^*$.  We now have
\begin{equation*}
\begin{aligned}
 W' = & \mu S^* \left[ 2 - s - \frac{1}{s} \right] + A c_R R^* \left[ 2 - \frac{r} {e} - \frac{e} {r} \right]	\\
 	& + \beta S^* R^* \left[ A q_E \left( 3 - \frac{1} {s} - \frac{e} {r} - \frac{s r} {e} \right)
					+ B q_R \left( 2 - s - \frac{1} {s} \right) \right],
\end{aligned} 
\end{equation*}
which is less or equal to zero by the arithmetic mean--geometric mean inequality, with equality if and only if
$S = S^*$, $E/R = E^*/ R^*$.  The largest invariant set for which $S$ remains constant at $S^*$ consists of
just the equilibrium $x^*$.  Thus, by LaSalle's Invariance Principle, we may conclude that $ x^* $ is globally
asymptotically stable.
\end{proof}

\section{Multiple Ideology Model}	\label{Section 3}
A situation that may preclude the spread of a violent extremist ideology is the presence of a competing more moderate
ideology.  In this section we want to study whether or not this approach may work.  In ecology a similar occurrence
is often governed by the {\it competitive exclusion principle}.  This principle states that {\it competitors cannot
coexist in the same niche}.  A similar principle holds in some epidemiological systems \cite{martcheva2015introduction}.

\subsection{Equations}
We consider a population that is influenced by two ideologies.  The population of interest is now divided into
five compartments:
\begin{itemize}
 \item susceptibles - $S$
 \item extremists following the first ideology - $E_1$
 \item recruiters for the first ideology - $R_1$.
 \item extremists following the second ideology - $E_2$
 \item recruiters for the second ideology - $R_2$.
\end{itemize}
The transfer diagram for the system appears in Figure \ref{fig:transfer_diagram1}.  The differential equations
that govern the system are:
\begin{equation}			\label{eqn:model1}
 \begin{aligned}
 S' & = \Lambda - \mu S - \beta_1 SR_1	- \beta_2 SR_2	\\
 E_1' & = q_{E_1} \beta_1 SR_1 - (\mu + d_{E_1} + c_{E_1}) E_1 + c_{R_1} R_1 		\\
 R_1' & = q_{R_1} \beta_1 S R_1 + c_{E_1} E_1 - (\mu + d_{R_1} + c_{R_1}) R_1\\ 
 E_2' & = q_{E_2} \beta_2 SR_2 - (\mu + d_{E_2} + c_{E_2}) E_2 + c_{R_2} R_2	\\
 R_2' & = q_{R_2} \beta_2 S R_2 + c_{E_2} E_2 - (\mu + d_{R_2} + c_{R_2} )R_2, 
 \end{aligned}
\end{equation}
where $ q_{E_1} + q_{R_1} = 1 $, $ q_{E_1}, q_{R_1} \in [0,1] $, $ q_{E_2} + q_{R_2} = 1 $, $ q_{E_2}, q_{R_2} \in [0,1]$, and all other parameters
are positive.
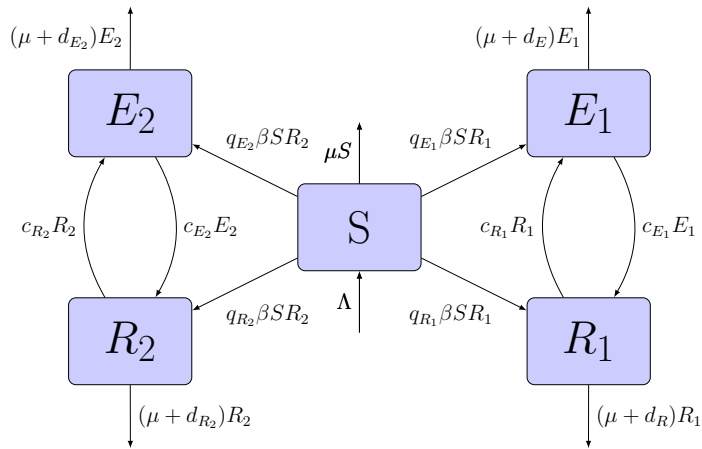
\begin{figure}[h] 
 \scalebox{0.7}{
\begin{tikzpicture}[node distance = 2cm,auto]
 \node [block] (S) {\Huge{S}}; 
 \node [block, below right=0.5cm and 2cm of S] (R1) {\Huge{$ R_1 $}};
 \node [block, above right=0.5cm and 2cm of S] (E1) {\Huge{$ E_1 $}};
 \node [block, below left=0.5cm and 2cm of S] (R2) {\Huge{$R_2$}};
 \node [block, above left=0.5cm and 2cm of S] (E2) {\Huge{$E_2$}};

 
 \path [line] (S) --node [below left, near end ]{$ q_{R_1} \beta SR_1 $} (R1);
 \path [line] (S) -- node [above left,near end ]{$q_{E_1} \beta S R_1 $} (E1); 
 \path [line] (S) -- node{$ \mu S $} (0,2);
 
 \path [line] (R1) edge [bend left] node [left]{$c_{R_1} R_1 $} (E1);
 \path [line] (E1) edge [bend left] node [right]{$c_{E_1} E_1 $} (R1);
 \path [line] (0,-2) -- node{$ \Lambda $} (S);
 
 \path [line] (R1) -- node [] {$(\mu + d_R) R_1 $} (4.35,-4.2);
 \path [line] (E1) -- node [] {$(\mu + d_E) E_1 $} (4.35,4.2);


 \path [line] (S) --node [below right, near end ]{$ q_{R_2} \beta SR_2 $} (R2);
 \path [line] (S) -- node [above right,near end ]{$q_{E_2} \beta S R_2 $} (E2); 
 \path [line] (S) -- node{$ \mu S $} (0,2);
 
 \path [line] (R2) edge [bend left] node [left]{$c_{R_2} R_2 $} (E2);
 \path [line] (E2) edge [bend left] node [right]{$c_{E_2} E_2 $} (R2);
 \path [line] (0,-2) -- node{$ \Lambda $} (S);
 
 \path [line] (R2) -- node [] {$(\mu + d_{R_2} ) R_2 $} (-4.35,-4.2);
 \path [line] (E2) -- node [] {$(\mu + d_{E_2} ) E_2 $} (-4.35,4.2);

\end{tikzpicture}
}
\caption{Transfer diagram for the model with two competing ideologies.  \label{fig:transfer_diagram1}}
\end{figure}

Let $ T = S + R_1 + E_1 + R_2 + E_2$ be the total population.  The proof of the following result is
similar to the proof of Proposition \ref{prop:invariant_region}.

\begin{proposition}\label{prop:invariant_region1} 
The region
$\widetilde \Delta = \left\{ (S, E_1, R_1, E_2, R_2 ) \in \mathbb{R}^5_{\geq 0} : T \leq \frac{\Lambda}{\mu} \right\}$
is positively invariant and attracting within $\mathbb{R}_{\ge 0}^5$ for the flow of \eqref{eqn:model1}.
\end{proposition}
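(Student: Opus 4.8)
The plan is to reproduce the two-part structure of the proof of \propref{prop:invariant_region}, since the five-dimensional system \eqref{eqn:model1} shares exactly the same conservation structure as \eqref{eqn:model}: each ideology contributes its own mass-action and switching terms, but all of these cancel out when the total population is tracked.

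First I would show that $\mathbb{R}_{\ge 0}^5$ is positively invariant by checking the direction of the vector field on each of the five coordinate hyperplanes. Along $S = 0$ we have $S' = \Lambda > 0$; along $E_1 = 0$ we get $E_1' = q_{E_1}\beta_1 SR_1 + c_{R_1}R_1 \ge 0$ whenever $S, R_1 \ge 0$; along $R_1 = 0$ the mass-action term vanishes and $R_1' = c_{E_1}E_1 \ge 0$ whenever $E_1 \ge 0$; and the analogous computations hold verbatim along $E_2 = 0$ and $R_2 = 0$. Since the field never points strictly outward on $\partial \mathbb{R}_{\ge 0}^5$, \cite[Proposition 2.1]{Haddad+2010} yields positive invariance of the nonnegative orthant.

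The second step is the estimate on the total population, and it is where the only real care is needed. Setting $T = S + E_1 + R_1 + E_2 + R_2$ and summing all five equations of \eqref{eqn:model1}, the transmission terms cancel for each $i$ because $q_{E_i} + q_{R_i} = 1$ (so $-\beta_i SR_i + q_{E_i}\beta_i SR_i + q_{R_i}\beta_i SR_i = 0$), while the switching terms $c_{E_i}E_i$ and $c_{R_i}R_i$ cancel in pairs. What remains is
\[
T' = \Lambda - \mu T - d_{E_1}E_1 - d_{R_1}R_1 - d_{E_2}E_2 - d_{R_2}R_2 \le \Lambda - \mu T.
\]
From here the argument is identical to the one-ideology case: a standard comparison theorem gives $T(t) \le \bigl(T(0) - \tfrac{\Lambda}{\mu}\bigr)e^{-\mu t} + \tfrac{\Lambda}{\mu}$, which shows both that $T(0) \le \Lambda/\mu$ forces $T(t) \le \Lambda/\mu$ for all $t \ge 0$ (positive invariance of $\widetilde\Delta$) and that $\limsup_{t\to\infty} T(t) \le \Lambda/\mu$ (attraction within $\mathbb{R}_{\ge 0}^5$).

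I do not expect any genuine obstacle; the proof is routine. The only point demanding attention is the bookkeeping in the summation, where one must verify that all four mass-action terms and all four switching terms cancel exactly, leaving only the linear mortality contribution $-\mu T$ together with the nonpositive extra-mortality terms.
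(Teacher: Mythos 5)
Your proposal is correct and is exactly the argument the paper intends: the paper gives no separate proof, stating only that it is ``similar to the proof of Proposition~\ref{prop:invariant_region},'' and your write-up is precisely that adaptation (boundary check on the five coordinate hyperplanes via \cite[Proposition 2.1]{Haddad+2010}, then the cancellation of mass-action and switching terms giving $T' \le \Lambda - \mu T$, comparison, and the $\limsup$ bound). Nothing is missing; the bookkeeping you flag as the only delicate point is carried out correctly.
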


\subsection{Equilibrium Points and Basic Reproduction Numbers}			\label{sec-3-equil}
Generally, System \eqref{eqn:model1} has up to three equilibria.  The computations are similar to the ones
shown in the previous section.  The first equilibrium point is an ideology-free equilibrium:
\[
 x_0 = (S, E_1, R_1, E_2, R_2) = \left( \frac{\Lambda} {\mu}, 0, 0, 0, 0 \right).
\]

In order to describe the other two equilibria, it is useful to first define the basic reproduction number for each
ideology.  (These quantities will be calculated derived using the next generation matrix method in Subsection
\ref{R1, R2 and more}.)  For $i=1,2$, let
\begin{equation*}
\mathcal{R}_i = \frac{\beta_i \Lambda \ccc_i}{\mu D_i},
\end{equation*}
where $\ccc_i = c_{E_i} + q_{R_i} \left( \mu + d_{E_i} \right)$ and
$D_i =  (\mu + d_{R_i} + c_{R_i}) (\mu + d_{E_i} + c_{E_i}) - c_{E_i} c_{R_i}$.
It is also useful to define
\begin{equation*}
\Gamma_i = \frac{\beta_i \ccc_i}{D_i},
\end{equation*}
for $i = 1, 2$, which leads to $\mathcal{R}_i = \frac{\Lambda}{\mu} \Gamma_i$.

For $\Rone > 1$, there is an equilibrium $x^*$ in which extremists and recruiters for ideology one are present
(i.e. $E_1, R_1 \neq 0$), while ideology two is absent (i.e. $E_2 = R_2 = 0$).  This is called the {\it ideology-one
dominance equilibrium}, and is given by 
\[
 x^* = \left( S^*, E_1^*, R_1^*, 0, 0 \right)
 	= \left(\frac{1} {\Gamma_1}, \frac{ \frac{\beta_1} {\Gamma_1} - (\mu + d_{R_1})}{\mu + d_{E_1}} R^*_1,
		\frac{\mu} {\beta_1} \left(\Rone - 1 \right), 0, 0 \right).
\]

For $\Rtwo > 1$, there is an equilibrium $x^{**}$ in which extremists and recruiters for ideology one are present
(i.e. $E_2, R_2 \neq 0$), while ideology two is absent (i.e. $E_1 = R_1 = 0$).  This is called a {\it ideology-two
dominance equilibrium}, and is given by 
\[
 x^{**} = (S^{**}, 0, 0, E_2^{**}, R_2^{**})
 		= \left( \frac{1} {\Gamma_2}, 0, 0,
			\frac{\frac{\beta_2} {\Gamma_2} - ( \mu + d_{R_2})} {\mu + d_{E_2}} R^{**}_2,
			\frac{\mu} {\beta_2} \left(\Rtwo - 1 \right) \right).
\]
The existence conditions for these equilibria are summarized in the following theorem.

\begin{theorem}
 The ideology-free equilibrium $x_0$ always exists.
 In addition, there is an ideology-one dominance equilibrium $x^*$ if and only if $\Rone > 1$.
 Similarly, there is an ideology-two dominance equilibrium $x^{**}$ if and only if $\Rtwo >1$.
\end{theorem}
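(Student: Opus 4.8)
The plan is to reduce each assertion to the single-ideology analysis already carried out in Section~2. The ideology-free equilibrium $x_0$ is handled by direct substitution: with $E_1 = R_1 = E_2 = R_2 = 0$ and $S = \Lambda/\mu$, every right-hand side of \eqref{eqn:model1} vanishes, so $x_0$ is an equilibrium for all admissible parameter values, with no condition required.

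For the ideology-one dominance equilibrium, the key structural observation is that the $E_2'$ and $R_2'$ equations of \eqref{eqn:model1} are homogeneous in $(E_2, R_2)$: there is no source term feeding ideology two, since cross-interaction is absent in this model (it is deferred to Section~4). Consequently, imposing $E_2 = R_2 = 0$ makes those two equations hold identically, and the remaining three equilibrium equations for $(S, E_1, R_1)$ become \emph{exactly} the single-ideology system \eqref{eqn:model}, with $\beta, q_E, q_R, c_E, c_R, d_E, d_R$ replaced by their subscript-$1$ counterparts and $\Rzero$ replaced by $\Rone$. I would then invoke the existence result of Subsection~\ref{ssec:eq_points}, namely that this system possesses a positive equilibrium if and only if its reproduction number exceeds one. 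This yields both implications at once: if $\Rone > 1$, the positive equilibrium of the reduced system, padded with $E_2 = R_2 = 0$, is precisely the point $x^*$ displayed before the theorem; conversely, any ideology-one dominance equilibrium has $E_2 = R_2 = 0$ by definition, hence solves the reduced single-ideology system with $E_1, R_1 > 0$, which forces $\Rone > 1$. The ideology-two dominance equilibrium is then obtained by the symmetric argument, interchanging the subscripts $1$ and $2$ and setting $E_1 = R_1 = 0$ instead.

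There is no deep obstacle here; the step requiring the most care is the positivity bookkeeping in the \emph{only if} directions. One must confirm that $E_1, R_1 > 0$ at the equilibrium genuinely demands $\Rone > 1$ and not merely $\Rone \ge 1$. This follows from the explicit formula $R_1^* = \frac{\mu}{\beta_1}\left(\Rone - 1\right)$ together with the sign computation of Section~2 (carried over verbatim with subscript-$1$ parameters), which shows that $E_1^*$ and $R_1^*$ share the sign of $\Rone - 1$; in particular $\Rone = 1$ gives $R_1^* = 0$, so no dominance equilibrium exists at the threshold. I expect the direct verification that the stated $x^*$ and $x^{**}$ satisfy all five equations of \eqref{eqn:model1} to be entirely routine once the reduction to \eqref{eqn:model} is in place, so the substance of the proof is the reduction itself rather than any new computation.
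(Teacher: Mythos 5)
Your proposal is correct and takes essentially the same route as the paper: the paper offers no separate proof, remarking only that ``the computations are similar to the ones shown in the previous section'' before displaying the explicit formulas for $x^*$ and $x^{**}$, which is precisely your reduction of each dominance equilibrium (after setting the other ideology's compartments to zero) to the single-ideology analysis of Section~2. Your explicit handling of the \emph{only if} direction at the threshold, via $R_1^* = \frac{\mu}{\beta_1}\left(\Rone - 1\right)$ and the fact that $E_1^*$ shares its sign, is exactly the sign computation the paper's Section~2 argument relies on, so nothing is missing.
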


An equilibrium in which practitioners of each type are present is called a {\it coexistence equilibrium}.
For a coexistence equilibrium the equations $E_1' = R_1 ' = E_2 ' = R_2 ' = 0$ must be satisfied.
Treating $S$ as a parameter we can view each of $E_1' = R_1 ' = 0$ and $E_2 ' = R_2 ' = 0$ as
homogeneous linear systems in two variables.  Since we are looking for non-trivial solutions of such
systems the same argument used in Subsection \ref{ssec:eq_points} shows that the first system requires 
\[
 S = \frac{1} {\Gamma_1} = \frac{\Lambda} {\mu} \frac{1} {\Rone}, 
\]
while the second system requires 
\[
 S = \frac{1} {\Gamma_2} = \frac{\Lambda} {\mu} \frac{1} {\Rtwo}.  
\]
These two expression for $S$ are consistent if and only if $\Rone = \Rtwo$.  Hence, in the generic case
in which the two reproduction numbers are different there is no coexistence equilibrium.

\subsection{Basic Reproduction Numbers and Invasion Numbers}	\label{R1, R2 and more}
There are two equilibria $x_0$ and $x^{**}$ for which the first ideology is absent.  Thus, it is reasonable to
consider whether the first ideology is able to invade near each of these equilibria.  This is determined by
studying the next generation matrix.  (Similar calculations for the second ideology will follow.)

Using the terminology in \cite{van2002reproduction}, the variables $E_1$ and $R_1$ will be considered as ``infected"
variables for the first ideology.  We will describe them as ``adopters" while still using the approach and notation
described in \cite{van2002reproduction}.  The rates of new adoptions of the first ideology is given by $\mathcal F$
and the movement between the groups of adopters is given by $\mathcal V$, with
\[
 \mathcal{F} = \begin{bmatrix}
 \mathcal{F}_{E_1} \\
 \mathcal{F}_{R_1} 
 \end{bmatrix} = 
 \begin{bmatrix}
 q_{E_1} \beta_1 SR_1 \\
 q_{R_1} \beta_1 SR_1 
 \end{bmatrix} 
\aand
\mathcal{V} = \begin{bmatrix}
 \mathcal{V}_{E_1} \\
 \mathcal{V}_{R_1}
 \end{bmatrix} = 
 \begin{bmatrix}
 (\mu + d_{E_1} + c_{E_1}) E_1 - c_{R_1} R_1 \\
 - c_{E_1} E_1 + (\mu + d_{R_1}+ c_{R_1} ) R_1 
 \end{bmatrix}.
\]
Let $\bar S$ be the value of $S$ at the equilibrium of interest - either $x_0$ or $x^{**}$.  Then
\[
 F = \beta_1 \bar S \begin{bmatrix}
 0& q_{E_1} \\
 0 & q_{R_1}
 \end{bmatrix}
\aand
V = \begin{bmatrix}
 (\mu +d_{E_1} + c_{E_1} ) & - c_{R_1} \\
 - c_{E_1} & (\mu + d_{R_1} + c_{R_1} )
 \end{bmatrix},
\]
The next generation matrix $N$ is given by
\[
 N = F V^{-1}
 =\frac{\beta_1 \bar S }{D_1} 
 \begin{bmatrix}
 0 & q_{E_1} \\
 0& q_{R_1}
 \end{bmatrix}
 \begin{bmatrix}
 (\mu +d_{R_1} + c_{R_1}) & c_{R_1} \\
 c_{E_1} & (\mu + d_{E_1} + c_{E_1}) 
 \end{bmatrix} .
\]
One eigenvalue of $N$ is $0$ and so the other eigenvalue is equal to the trace, which is $\bar S \Gamma_1$.
Thus, the expected number of new adopters of the first ideology, caused by a single adherent to the ideology
near $x_0$, is given by
\[
\Rone = S_0 \Gamma_1 = \frac{\Lambda}{\mu} \Gamma_1,
\]
the same as was defined in the Subsection \ref{sec-3-equil}.  Similarly, the expected number of new adopters of the
first ideology, caused by a single adherent to the ideology, when the system is near $x^{**}$, is given by
\[
\Ione = S^{**} \Gamma_1 = \frac{\Gamma_1}{\Gamma_2}.
\]
$\Ione$ is called the {\it invasion number} for the first ideology.  The sign of $\Ione-1$ determines whether
or not the first ideology can invade near $x^{**}$.  For the second ideology, we can similarly calculate
\[
\Rtwo = \frac{\Lambda}{\mu} \Gamma_2
\aand
\Itwo = \frac{\Gamma_2}{\Gamma_1}.
\]
It now follows that
\[
\Ione = \frac{\Rone}{\Rtwo}
\aand
\Itwo = \frac{\Rtwo}{\Rone}.
\]
By \cite[Theorem 2]{van2002reproduction}, we have the following result.
\begin{theorem}
The following table outlines the existence and local stability of the equilibria $x_0$, $x^*$ and $x^{**}$ (where
LAS denotes locally asymptotically stable and a hyphen denotes that the equilibrium does not exist).

\begin{center}
\begin{tabular}{ | l | c | c | c | } \hline
 & $x_0$ & $x^*$ & $x^{**}$					\\ \hline
$\Rone, \Rtwo < 1$ & LAS & - & -				\\  
$\Rone < 1 < \Rtwo$ & unstable & - & LAS		\\
$\Rtwo < 1 < \Rone$ & unstable & LAS & -		\\
$1 < \Rtwo < \Rone$ & unstable & LAS & unstable	\\
$1 < \Rone < \Rtwo$ & unstable & unstable & LAS	\\ \hline
\end{tabular}
\end{center}
\end{theorem}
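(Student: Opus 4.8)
The plan is to establish each entry of the table by analyzing the spectrum of the Jacobian of \eref{eqn:model1} at the relevant equilibrium, combining the next-generation criterion of \cite[Theorem 2]{van2002reproduction} with the single-ideology global stability already proved in \thmref{thm:xast_stability}. Since existence of $x_0$, $x^*$, $x^{**}$ and their dependence on $\Rone$ and $\Rtwo$ was recorded in the previous theorem, only the local-stability entries remain, and these will be read off from the leading eigenvalue in each of the five parameter regimes.

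For the ideology-free equilibrium $x_0$, I would apply \cite[Theorem 2]{van2002reproduction} directly to the full five-dimensional system, taking $E_1,R_1,E_2,R_2$ as the adopter variables. At $x_0$ the linearized $E_1,R_1$ equations are decoupled from the $E_2,R_2$ equations, so the next-generation matrix is block diagonal with one $2\times 2$ block per ideology. The spectral radius of the $i$-th block is exactly $\mathcal{R}_i$ -- this is the computation of Subsection \ref{R1, R2 and more} specialized to $\bar S = S_0$ -- so the spectral radius of the whole next-generation matrix is $\max(\Rone,\Rtwo)$. Theorem 2 of \cite{van2002reproduction} then yields that $x_0$ is locally asymptotically stable when $\max(\Rone,\Rtwo) < 1$, i.e. when both reproduction numbers are below one, and unstable whenever either exceeds one, which matches the $x_0$ column.

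The key structural observation for the dominance equilibria is that the Jacobian at $x^*$ is block triangular. Because $E_2 = R_2 = 0$ at $x^*$, every partial derivative of $E_2'$ and $R_2'$ with respect to $S,E_1,R_1$ vanishes there, so the Jacobian has the form $\bigl[\begin{smallmatrix} J_{11} & J_{12} \\ 0 & J_{22} \end{smallmatrix}\bigr]$, where $J_{11}$ is the $3\times 3$ Jacobian of the single-ideology-one subsystem at its endemic equilibrium and $J_{22}$ is the $2\times 2$ invasion block for ideology two. The spectrum is thus the union of the spectra of the two blocks. All eigenvalues of $J_{11}$ have negative real part, since on the invariant subspace $E_2 = R_2 = 0$ the flow is precisely that of the bare-bones model \eref{eqn:model}, whose endemic equilibrium is globally -- hence locally -- asymptotically stable by \thmref{thm:xast_stability} whenever $\Rone > 1$, which is exactly the existence condition for $x^*$. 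For $J_{22}$ I would again invoke \cite[Theorem 2]{van2002reproduction}: this block is the matrix $F-V$ for ideology two evaluated at $\bar S = S^* = 1/\Gamma_1$, whose next-generation matrix has spectral radius $S^*\Gamma_2 = \Gamma_2/\Gamma_1 = \Itwo = \Rtwo/\Rone$ by Subsection \ref{R1, R2 and more}. Hence $J_{22}$ is stable iff $\Itwo < 1$, that is iff $\Rtwo < \Rone$. Combining the two blocks, $x^*$ is locally asymptotically stable exactly when it exists and $\Rtwo < \Rone$, and unstable when it exists and $\Rtwo > \Rone$; a symmetric argument, exchanging the two ideologies and using $\Ione = \Rone/\Rtwo$, disposes of $x^{**}$. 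Comparing these conclusions against the remaining rows confirms every entry.

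The main obstacle, and the only genuine piece of work, is the verification that the lower-left Jacobian block is identically zero at $x^*$ (and at $x^{**}$), which is what licenses the splitting of the spectrum and lets the resident block be controlled by \thmref{thm:xast_stability} and the invader block by the invasion number. Once that block-triangular structure is in hand, the identification of $J_{22}$ with the $F-V$ matrix of Subsection \ref{R1, R2 and more} is immediate and the remaining case analysis is pure bookkeeping.
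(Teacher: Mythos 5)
Your overall architecture is the same as the paper's: the paper computes $\Rone$, $\Rtwo$ and the invasion numbers $\Ione = \Rone/\Rtwo$, $\Itwo = \Rtwo/\Rone$, and then obtains the entire table in one line by citing \cite[Theorem 2]{van2002reproduction} at each of $x_0$, $x^*$ and $x^{**}$. Your block-triangular reading of the Jacobian at the dominance equilibria is exactly the mechanism that this citation packages, and your treatment of $x_0$ (block-diagonal next-generation matrix with spectral radius $\max\{\Rone,\Rtwo\}$) and of the invader block $J_{22}$ (Hurwitz iff $\Itwo<1$, i.e.\ iff $\Rtwo<\Rone$) is correct.

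However, one step as you wrote it is a non-implication, and it is precisely the step needed for the LAS entries in the last three rows. You assert that all eigenvalues of the resident block $J_{11}$ have negative real part ``since'' the endemic equilibrium of the bare-bones model is globally, hence locally, asymptotically stable by \thmref{thm:xast_stability}. Asymptotic stability, even global, does not imply that the linearization is Hurwitz: $\dot x = -x^3$ has a globally asymptotically stable equilibrium whose linearization has eigenvalue $0$. \thmref{thm:xast_stability} is proved with a Lyapunov function and yields no spectral information, so it cannot exclude eigenvalues of $J_{11}$ on the imaginary axis; if such eigenvalues occurred, the spectrum splitting would leave the linearization test inconclusive and LAS of $x^*$ in the full five-dimensional system would not follow. (The ``unstable'' entries are unaffected, since a single eigenvalue of $J_{22}$ with positive real part suffices.) To close the gap you must either verify directly, e.g.\ by Routh--Hurwitz, that the Jacobian of \eqref{eqn:model} at its endemic equilibrium is Hurwitz whenever $\Rzero>1$, or replace the linearization of the resident directions by a reduction-type argument ($x^*$ is asymptotically stable relative to the invariant subspace $\{E_2=R_2=0\}$, and the transverse block $J_{22}$ is Hurwitz when $\Itwod<1$ with $\delta=0$, i.e.\ $\Itwo<1$), or simply note that the LAS entries can be recovered a posteriori from the global results of \thmref{3 GAS}, whose Lyapunov proof is independent of this table. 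Be aware that the same issue is hypothesis (A5) of \cite[Theorem 2]{van2002reproduction} applied at $x^*$ and $x^{**}$, so the paper's one-line proof implicitly assumes it as well; your instinct to make the mechanism explicit is good, but the justification you supplied for it does not do the job.
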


This theorem characterizes how the system behaves near each of the equilibria based on $\Rone$ and $\Rtwo$.
For instance, if $\Rone < 1$ and $ \Rtwo < 1$ and the initial number of adopters of each ideology is sufficiently
small, then both ideologies will disappear.  When both ideologies are able to survive in isolation
(i.e. $1 < \Rone, \Rtwo$), then the invasion numbers $\Ione$ and $\Itwo$ determine whether or
not a small amount of one ideology can successfully invade when the other is already established
at its equilibrium.

\subsection{The global dynamics}\label{subs:globalpicture}
The stability results given in the previous subsection were local in nature.  We now give the global
results.

\begin{theorem}
 If $\Rone <1$ and $ \Rtwo<1 $, then $x_0$ is globally asymptotically stable.
\end{theorem}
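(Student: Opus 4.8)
The plan is to generalize the Lyapunov function used in the proof of \thmref{thm:stable0}. Since the two ideologies are coupled only through the susceptible class $S$, and each pair $(E_i, R_i)$ enters the dynamics in exactly the same algebraic form as the single-ideology pair $(E,R)$, I would take
\[
U = S_0\, g\!\left( \frac{S}{S_0} \right) + \frac{c_{E_1}}{\ccc_1} E_1 + \frac{\mu + d_{E_1} + c_{E_1}}{\ccc_1} R_1 + \frac{c_{E_2}}{\ccc_2} E_2 + \frac{\mu + d_{E_2} + c_{E_2}}{\ccc_2} R_2,
\]
where $g$ is as in \eqref{g} and $S_0 = \Lambda/\mu$. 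As in the single-ideology case, $g$ is non-negative with a unique zero at $1$, and the linear terms carry positive coefficients, so $U \ge 0$ with equality precisely at $x_0$. The logarithmic singularity at $S = 0$ is harmless: since $\left. S' \right|_{S=0} = \Lambda > 0$ and $\widetilde\Delta$ is compact, positively invariant, and attracting by \propref{prop:invariant_region1}, we again have $\liminf S(t) > 0$ and may assume $S > 0$.

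The key step is to differentiate $U$ along solutions of \eqref{eqn:model1} and observe that the computation decouples ideology by ideology. The term $\left(1 - S_0/S\right) S'$ contributes $-\mu (S - S_0)^2/S$ together with $-\beta_i S R_i + \beta_i S_0 R_i$ for each $i$, while the $(E_i, R_i)$ block of $U$ reproduces exactly the single-ideology computation, yielding $\beta_i S R_i - (D_i/\ccc_i) R_i$. No terms mixing the two ideologies arise, because $E_1, R_1$ never appear in the $E_2, R_2$ equations and vice versa. Collecting terms and using $\mathcal{R}_i = \beta_i S_0 \ccc_i / D_i$, I expect to obtain
\[
U' = -\mu \frac{(S - S_0)^2}{S} + \frac{D_1}{\ccc_1}\left( \Rone - 1 \right) R_1 + \frac{D_2}{\ccc_2}\left( \Rtwo - 1 \right) R_2.
\]

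With $\Rone < 1$ and $\Rtwo < 1$, every term on the right is non-positive, so $U' \le 0$, with equality forcing $S = S_0$ and $R_1 = R_2 = 0$. On this set the largest invariant subset is just $\{x_0\}$: holding $R_1$ at $0$ forces $R_1' = c_{E_1} E_1 = 0$, hence $E_1 = 0$, and similarly $E_2 = 0$. LaSalle's Invariance Principle then yields global asymptotic stability of $x_0$. The only point requiring care is the verification that the cross-ideology terms genuinely cancel; since this reduces to applying the single-ideology identity $q_{E_i} c_{E_i} + q_{R_i}(\mu + d_{E_i} + c_{E_i}) = \ccc_i$ separately for each $i$, I do not anticipate any real difficulty, and the argument is essentially two copies of the proof of \thmref{thm:stable0} sharing the single quadratic term in $S$.
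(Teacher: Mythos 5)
Your proposal is correct and is essentially identical to the paper's own proof: the same Lyapunov function $U$, the same derivative identity $U' = -\mu (S-S_0)^2/S + \frac{D_1}{\ccc_1}(\Rone - 1)R_1 + \frac{D_2}{\ccc_2}(\Rtwo - 1)R_2$, and the same conclusion via LaSalle's Invariance Principle. The extra details you supply (the decoupling of the two ideology blocks and the identification of the largest invariant set) are exactly the steps the paper leaves as ``a computation similar to the one in the proof of Theorem~\ref{thm:stable0}.''
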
 

\begin{proof} 
Note that $\left. S' \right|_{S=0} = \Lambda > 0$, and so we may assume that $S>0$.  We now study
the stability of $x_0$ using the Lyapunov function 
\[
 U = S_0 g \left( \frac{S}{S_0} \right) + \frac{c_{E_1}}{\ccc_1} E_1 + \frac{\mu + d_{E_1} + c_{E_1}}{\ccc_1} R_1 + 
 						\frac{c_{E_2}}{\ccc_2} E_2 + \frac{\mu + d_{E_2} + c_{E_2}}{\ccc_2} R_2
\]
where the function $g$ is given in \eqref{g}.
A computation similar to the one in the proof of Theorem \ref{thm:stable0} shows that 
\[
 U' = - \mu \frac{\left( S - S_0\right)^2}{S}
 		+ \frac{D_1}{\ccc_1} \left[ \Rone - 1 \right] R_1
 		+ \frac{D_2}{\ccc_2} \left[ \Rtwo - 1 \right] R_2.
\]
By LaSalle's principle it follows that if $\Rone < 1 $ and $ \Rtwo < 1 $ then 
$x_0$ is globally asymptotically stable.
\end{proof}

\begin{theorem}		\label{3 GAS}
If $\Rone > \max \left\{ 1, \Rtwo \right\}$, then $x^*$ is globally asymptotically stable.
If $\Rtwo > \max \left\{ 1, \Rone \right\}$, then $x^{**}$ is globally asymptotically stable.
\end{theorem}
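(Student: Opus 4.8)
The plan is to prove the first statement (global asymptotic stability of $x^*$ when $\Rone > \max\{1,\Rtwo\}$); the second follows verbatim by exchanging the roles of the two ideologies. The natural candidate is a Lyapunov function that reuses the Volterra-type construction of \thmref{thm:xast_stability} on the ideology-one block and appends linear terms on the ideology-two block exactly as in the proof of \thmref{thm:stable0}:
\[
W = S^* g\!\left(\frac{S}{S^*}\right) + A E_1^* g\!\left(\frac{E_1}{E_1^*}\right) + B R_1^* g\!\left(\frac{R_1}{R_1^*}\right) + \frac{c_{E_2}}{\ccc_2} E_2 + \frac{\mu + d_{E_2} + c_{E_2}}{\ccc_2} R_2,
\]
where $g$ is as in \eqref{g} and $A, B > 0$ are fixed by the same two conditions as in \thmref{thm:xast_stability}, namely $A q_{E_1} + B q_{R_1} = 1$ and $B c_{E_1} E_1^* = A c_{R_1} R_1^* + A q_{E_1} \beta_1 S^* R_1^*$. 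The $g$-terms require $E_1, R_1 > 0$, but since $(E_1,R_1)\ne(0,0)$ immediately propagates to $E_1, R_1 > 0$ for $t>0$, this is not a genuine restriction on the relevant domain.

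First I would differentiate $W$ along \eqref{eqn:model1} and isolate the contribution of the three ideology-one variables. Substituting the equilibrium relations $\Lambda = \mu S^* + \beta_1 S^* R_1^*$, $(\mu + d_{E_1} + c_{E_1})E_1^* = q_{E_1}\beta_1 S^* R_1^* + c_{R_1} R_1^*$ and $(\mu + d_{R_1} + c_{R_1})R_1^* = q_{R_1}\beta_1 S^* R_1^* + c_{E_1}E_1^*$, this block reproduces the computation of \thmref{thm:xast_stability} and collapses, via the arithmetic--geometric mean inequality, to a non-positive expression that vanishes only when $S = S^*$ and $E_1/R_1 = E_1^*/R_1^*$. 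The only genuinely new contribution comes from the cross term $-\beta_2 S R_2$ inside $S'$ together with the two linear ideology-two terms.

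The key step is to show that these new contributions telescope into a single manageable term. The factor $\left(1 - S^*/S\right)(-\beta_2 S R_2)$ produces $-\beta_2 S R_2 + \beta_2 S^* R_2$, while the combination $\tfrac{c_{E_2}}{\ccc_2} E_2' + \tfrac{\mu+d_{E_2}+c_{E_2}}{\ccc_2} R_2'$ simplifies exactly as in \thmref{thm:stable0} (the $E_2$ terms cancel and the coefficient of $\beta_2 S R_2$ reduces to $1$) to $\beta_2 S R_2 - \tfrac{D_2}{\ccc_2} R_2$. Adding these, the $\beta_2 S R_2$ terms cancel and I am left with $\left[\beta_2 S^* - \frac{D_2}{\ccc_2}\right] R_2$. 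Using $S^* = 1/\Gamma_1$ and $D_2/\ccc_2 = \beta_2/\Gamma_2$, this equals $\tfrac{D_2}{\ccc_2}(\Itwo - 1) R_2$ with $\Itwo = \Gamma_2/\Gamma_1 = \Rtwo/\Rone$, so that
\[
W' = [\,\text{non-positive ideology-one part}\,] + \frac{D_2}{\ccc_2}\left(\Itwo - 1\right) R_2.
\]
Under $\Rone > \max\{1, \Rtwo\}$ we have $\Rone > 1$ (so $x^*$ exists) and $\Itwo < 1$, whence $W' \le 0$ on $\widetilde\Delta$. I would then close with LaSalle's Invariance Principle: since $\Itwo - 1 < 0$ strictly, equality forces $R_2 = 0$ in addition to $S = S^*$ and $E_1/R_1 = E_1^*/R_1^*$; on the largest invariant subset $R_2 \equiv 0$ gives $R_2' = c_{E_2}E_2 = 0$, hence $E_2\equiv 0$, while $S \equiv S^*$ forces $R_1 = R_1^*$ and therefore $E_1 = E_1^*$, so the invariant set is $\{x^*\}$. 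I expect the main obstacle to be precisely the bookkeeping of that third step --- confirming that the indefinite cross term $-\beta_2 S R_2$ and the two linear $E_2, R_2$ terms really do combine into the clean multiple $(\Itwo-1)R_2$, so that the sign of $W'$ is governed by a single invasion number rather than by an uncontrolled coupling.
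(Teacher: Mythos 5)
Your proposal is correct and is essentially the paper's own proof in mirror image: the paper writes out the symmetric case $\Rtwo > \max\{1,\Rone\}$ using exactly this Lyapunov function structure --- Volterra-type $g$-terms on $S$ and the surviving ideology with the same two conditions fixing $A,B$, plus the linear block $\frac{c_{E_1}}{\ccc_1} E_1 + \frac{\mu+d_{E_1}+c_{E_1}}{\ccc_1} R_1$ on the excluded ideology --- and obtains the same telescoping of the cross term into the single expression $\frac{D_1}{\ccc_1}\left(\frac{\Rone}{\Rtwo}-1\right)R_1$ before closing with LaSalle. The only difference is which of the two symmetric cases is written out in full (you prove the $x^*$ case and invoke symmetry for $x^{**}$; the paper does the reverse), so your argument matches the paper's.
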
 

\begin{proof} 
Suppose that $\Rtwo > \max \left\{ 1, \Rone \right\}$.  (We prove this case because it will help in
Section~\ref{Cross-interaction}.)  Since $\Rtwo>1$, the equilibrium $x^{**}$ is non-negative.  Let
\[
 W= W_1 + W_2,
\]
with
\[
W_1 = \frac{c_{E_1}}{\ccc_1} E_1 + \frac{\mu + d_{E_1}+ c_{E_1}}{\ccc_1} R_1
\]
and
\[
W_2 = S^{**} g \left( \frac{S}{S^{**}}\right)
			+ A E_2^{**} g \left( \frac{E_2}{E_2^{**}} \right)
			+ B R_2^{**} g\left( \frac{R_2}{R_2^{**}} \right),
\]
where $A$ and $B$ are chosen so that $A q_{E_2} + B q_{R_2} = 1$ and
$B c_{E_2} E_2^{**} = A c_{R_2} R_2^{**} + A q_{E_2} \beta_2 S^{**} R_2^{**}$.
Recalling that $\ccc_1 = c_{E_1} + q_{R_1} \left( \mu + d_{E_1} \right)$, we find that
\begin{equation*}
W_1' = \beta_1 S R_1 - \frac{D_1}{\ccc_1} R_1.
\end{equation*}

\noindent
In the calculations that follow, we use the scaled variables $\left( s, e_2, r_2 \right)
= \left( \frac{S \;}{S^{**}}, \frac{E_2 \;}{E_2^{**}}, \frac{R_2 \;}{R_2^{**}}\right)$.
A computation similar to the one in the proof of Theorem \ref{thm:xast_stability} shows that
\begin{equation*}
\begin{aligned}
W_2' = & \mu S^{**} \left[ 2 - s - \frac{1}{s} \right] + A c_{R_2} R_2^{**} \left[ 2 - \frac{r_2} {e_2} - \frac{e_2} {r_2} \right]	\\
 	& + \beta_2 S^{**} R_2^{**} \left[ A q_{E_2} \left( 3 - \frac{1} {s} - \frac{e_2}{r_2} - \frac{s r_2} {e_2} \right)
					+ B q_{R_2} \left( 2 - s - \frac{1} {s} \right) \right]							\\
	& + \beta_1 R_1 \left( S^{**} - S \right)
\end{aligned} 
\end{equation*}
Thus, we have
\begin{equation*}
\begin{aligned}
W' = \mu S^{**} & \left[ 2 - s - \frac{1}{s} \right] + A c_{R_2} R_2^{**} \left[ 2 - \frac{r_2} {e_2} - \frac{e_2} {r_2} \right]	\\
 	& + \beta_2 S^{**} R_2^{**} \left[ A q_{E_2} \left( 3 - \frac{1} {s} - \frac{e_2}{r_2} - \frac{s r_2} {e_2} \right)
					+ B q_{R_2} \left( 2 - s - \frac{1} {s} \right) \right]							\\
	& + \frac{D_1}{\ccc_1} \left( \frac{\beta_1 \ccc_1}{D_1} S^{**} - 1 \right) R_1
\end{aligned} 
\end{equation*}

Recalling that $\Gamma_1 = \frac{\beta_1 \ccc_1}{D_1}$, $S^{**} = \frac{1}{\Gamma_2}$ and
$\mathcal{R}_i = \frac{\Lambda}{\mu} \Gamma_i$, it follows that
$\frac{\beta_1 \ccc_1}{D_1} S^{**} = \frac{\Gamma_1}{\Gamma_2} = \frac{\Rone}{\Rtwo} < 1$.
Using that fact, along with the arithmetic mean--geometric mean inequality, we have
\begin{equation}				\label{3W'}
\begin{aligned}
W' &= \mu S^{**} \left[ 2 - s - \frac{1}{s} \right] + A c_{R_2} R_2^{**} \left[ 2 - \frac{e_2} {r_2} - \frac{r_2} {e_2} \right]	\\
 	&\qquad + \beta_2 S^{**} R_2^{**} \left[ A q_{E_2} \left( 3 - \frac{1} {s} - \frac{e_2}{r_2} - \frac{s r_2} {e_2} \right)
					+ B q_{R_2} \left( 2 - s - \frac{1} {s} \right) \right]								\\
	&\qquad + \frac{D_1}{\ccc_1} \left( \frac{\Rone}{\Rtwo} - 1 \right) R_1								\\
&\le 0,
\end{aligned} 
\end{equation}
with equality only if $s=1$ (meaning $S = S^{**}$) and $R_1 = 0$.  The largest invariant set for which $S$ remains
constant at $S^{**}$ and $R_1$ remains constant at $0$ consists of just the equilibrium $x^{**}$.  Thus, by
LaSalle's Invariance Principle, we may conclude that $x^{**}$ is globally asymptotically stable whenever
$\Rtwo > \max \left\{ 1, \Rone \right\}$.

The proof that $x^*$ is globally asymptotically stable whenever $\Rone > \max \left\{ 1, \Rtwo \right\}$ is similar.
\end{proof}

Coexistence is not possible except for the degenerate case $\Rone = \Rtwo$.

\section{Multiple Ideology Model with Cross-interaction}		\label{Cross-interaction}
It is known that in nature many species of microorganisms stably coexist while interacting with each other.
In mathematical epidemiology several mechanisms for coexistence of competing strains of a pathogen have
been studied.  Of particular relevance for us is the mechanism of superinfection, that is, the process by which
an individual that has previously been infected by one strain of a pathogen becomes infected with a different
strain.  The second strain is assumed to ``take over" the infected individual immediately
\cite{martcheva2015introduction}.  Thus this individual is considered to be infected by only
the second strain of the pathogen.
In our considerations, the role of pathogens is replaced by ideologies.

\subsection{Equations}
We introduce a cross-interaction term to take into account that some of the extremists subscribing
to one ideology may switch to supporting the other ideology - but only in one direction.  We assume that
recruiters do not change from one ideologies to the other.

\begin{equation}\label{eqn:model2}
 \begin{aligned}
 S' & = \Lambda - \mu S - \beta_1 SR_1	- \beta_2 SR_2	\\
 E_1' & = q_{E_1}\beta_1 SR_1 - (\mu + d_{E_1} + c_{E_1}) E_1 + c_{R_1} R_1 	-\delta E_1 E_2 	\\
 R_1' & = q_{R_1} \beta_1 S R_1 + c_{E_1} E_1 - (\mu + d_{R_1} + c_{R_1}) R_1\\ 
 E_2 ' & = q_{E_2}\beta_2 SR_2 - (\mu + d_{E_2} + c_{E_2}) E_2 + c_{R_2} R_2	+\delta E_1 E_2 \\
 R_2' & = q_{R_2} \beta_2 S R_2 + c_{E_2} E_2 - (\mu + d_{R_2} + c_{R_2} )R_2 
 \end{aligned}
\end{equation}
where $q_{E_1} + q_{R_1} =  q_{E_2} + q_{R_2} = 1$, with $q_{E_1}, q_{R_1}, q_{E_2}, q_{R_2} \in [0,1]$.
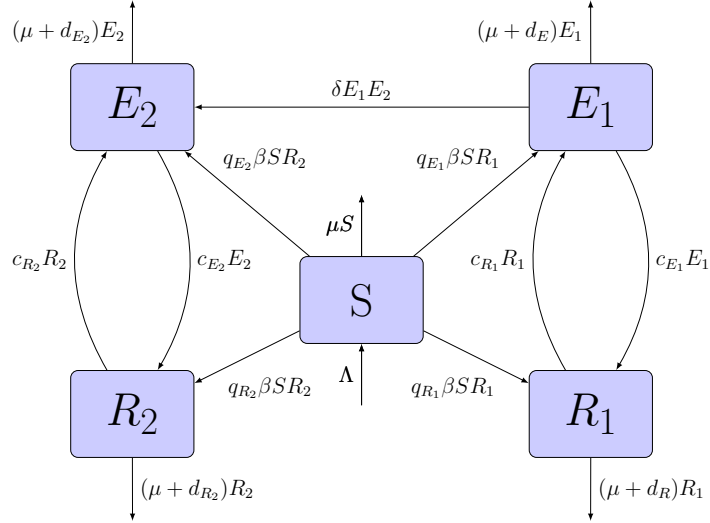
\begin{figure}[t] 
 \scalebox{0.7}{
\begin{tikzpicture}[node distance = 2cm,auto]
 \node [block] (S) {\Huge{S}}; 
 \node [block, below right=0.5cm and 2cm of S] (R1) {\Huge{$ R_1 $}};
 \node [block, above right=2cm and 2cm of S] (E1) {\Huge{$ E_1 $}};
 \node [block, below left=0.5cm and 2cm of S] (R2) {\Huge{$R_2$}};
 \node [block, above left=2cm and 2cm of S] (E2) {\Huge{$E_2$}};

 
 \path [line] (S) --node [below left, near end ]{$ q_{R_1} \beta SR_1 $} (R1);
 \path [line] (S) -- node [above left,near end ]{$q_{E_1} \beta S R_1 $} (E1); 
 \path [line] (S) -- node{$ \mu S $} (0,2);
 
 \path [line] (R1) edge [bend left] node [left]{$c_{R_1} R_1 $} (E1);
 \path [line] (E1) edge [bend left] node [right]{$c_{E_1} E_1 $} (R1);
 \path [line] (0,-2) -- node{$ \Lambda $} (S);
 
 \path [line] (R1) -- node [] {$(\mu + d_R) R_1 $} (4.35,-4.2);
 \path [line] (E1) -- node [] {$(\mu + d_E) E_1 $} (4.35,5.7);


 \path [line] (S) --node [below right, near end ]{$ q_{R_2} \beta SR_2 $} (R2);
 \path [line] (S) -- node [above right,near end ]{$q_{E_2} \beta S R_2 $} (E2); 
 \path [line] (S) -- node{$ \mu S $} (0,2);
 
 \path [line] (R2) edge [bend left] node [left]{$c_{R_2} R_2 $} (E2);
 \path [line] (E2) edge [bend left] node [right]{$c_{E_2} E_2 $} (R2);
 \path [line] (0,-2) -- node{$ \Lambda $} (S);
 
 \path [line] (R2) -- node [] {$(\mu + d_{R_2} ) R_2 $} (-4.35,-4.2);
 \path [line] (E2) -- node [] {$(\mu + d_{E_2} ) E_2 $} (-4.35,5.7);
 \path [line] (E1) -- node [above ]{$ \delta E_1 E_2 $} (E2);

\end{tikzpicture}
}
\caption{Transfer diagram for the model with two ideologies and cross-interaction.  \label{fig:transfer_diagram2}}
\end{figure}

Let $ T = S + R_1 + E_1 + R_2 + E_2$ be the total population, then we have the following proposition.

\begin{proposition}\label{prop:invariant_region2} 
 The region $\widetilde \Delta = \left \{ (S, E_1, R_1, E_2, R_2 ) \in \mathbb{R}^5_{\geq 0} : T \leq \frac{\Lambda} {\mu} \right \}$
 is positively invariant and attracting within $\mathbb{R}_{\ge 0}^5$ for the flow of \eqref{eqn:model2}.
\end{proposition}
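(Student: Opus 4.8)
The plan is to follow the proof of Proposition~\ref{prop:invariant_region} essentially verbatim, since the cross-interaction term $\delta E_1 E_2$ is arranged so as not to interfere with either step. First I would establish that $\mathbb{R}^5_{\ge 0}$ is positively invariant by checking the sign of the vector field on each coordinate boundary face. On $S=0$ we have $S' = \Lambda > 0$; on $R_1 = 0$ we have $R_1' = c_{E_1} E_1 \ge 0$; and on $R_2 = 0$ we have $R_2' = c_{E_2} E_2 \ge 0$. The only faces where the cross-interaction term could conceivably matter are $E_1 = 0$ and $E_2 = 0$. But on $E_1 = 0$ the term $\delta E_1 E_2$ vanishes, leaving $E_1' = q_{E_1}\beta_1 S R_1 + c_{R_1} R_1 \ge 0$, and symmetrically on $E_2 = 0$ the term vanishes, leaving $E_2' = q_{E_2}\beta_2 S R_2 + c_{R_2} R_2 \ge 0$. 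Invoking \cite[Proposition 2.1]{Haddad+2010} as before then gives positive invariance of $\mathbb{R}^5_{\ge 0}$.

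Next I would bound the total population. Summing the five equations of \eqref{eqn:model2}, the contribution $-\delta E_1 E_2$ from the $E_1'$ equation and the contribution $+\delta E_1 E_2$ from the $E_2'$ equation cancel exactly, so the cross-interaction is invisible to $T'$. This yields
\[
 T' = \Lambda - \mu T - d_{E_1} E_1 - d_{R_1} R_1 - d_{E_2} E_2 - d_{R_2} R_2 \le \Lambda - \mu T,
\]
exactly as in the single- and two-ideology models. A standard comparison argument then gives $T(t) \le \left( T(0) - \frac{\Lambda}{\mu} \right) e^{-\mu t} + \frac{\Lambda}{\mu}$, from which positive invariance of $\widetilde\Delta$ (when $T(0) \le \frac{\Lambda}{\mu}$) and the attracting property ($\limsup_{t \to \infty} T \le \frac{\Lambda}{\mu}$) both follow.

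There is essentially no genuine obstacle here; the content of the proof is precisely the observation that the cross-interaction term is \emph{conservative}, in that it merely transfers mass from $E_1$ to $E_2$ without creating or destroying any. Consequently it contributes nothing to the total $T$ and it vanishes on exactly the two boundary faces where it might otherwise have spoiled the inward-pointing argument. The only points that warrant explicit verification are this cancellation in $T'$ and this vanishing on $\{E_1=0\}$ and $\{E_2=0\}$, and both are immediate from the placement of the $\pm \delta E_1 E_2$ terms and the product structure of the term itself.
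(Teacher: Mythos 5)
Your proof is correct and is exactly the argument the paper intends, since the paper simply states that the proof is ``similar to the proof of Proposition~\ref{prop:invariant_region}'': boundary-face checks plus the comparison bound on $T' \le \Lambda - \mu T$. Your explicit verification that the $\pm\delta E_1 E_2$ terms vanish on the faces $\{E_1=0\}$, $\{E_2=0\}$ and cancel in the sum is precisely the point that makes the earlier proof carry over.
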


The proof is similar to the proof of Proposition \ref{prop:invariant_region}.

\subsection{Equilibrium points}
This new system shares three equilibrium points with the previous system.  In fact, it is easy to see that the
system has the following equilibrium points that were defined in Section \ref{sec-3-equil}:
\begin{align*} 
 x_0 & = \left( \frac{\Lambda} {\mu}, 0, 0 ,0, 0 \right) \\
 x^{\ast} & = \left(S^*, E_1^*, R_1^*, 0, 0 \right) \\
 x^{**} & = \left( S^{**}, 0, 0, E_2^{**}, R_2^{**} \right) 
\end{align*} 
In some cases there is also a coexistence equilibrium, which will be studied later.

\subsection{Basic reproduction numbers and invasion numbers}		\label{brnain}
The basic reproduction numbers for this model coincide with the ones obtained for the previous model.
That is, we have 
\[
 \mathcal{R}_i = \frac{\Lambda}{\mu} \Gamma_i
 	= \beta_i \frac{\Lambda}{\mu}
		\frac{c_{E_i} + q_{R_i} \left( \mu + d_{E_i} \right)}{(\mu + d_{R_i} + c_{R_i}) (\mu + d_{E_i} + c_{E_i}) - c_{E_i} c_{R_i}}
\]
for $i = 1,2$.

\begin{proposition}
 The equilibrium $x_0$ is locally asymptotically stable if $\max \left\{ \Rone, \Rtwo \right\} < 1$,
 and is unstable if $\max \left\{ \Rone, \Rtwo \right\} > 1$.
\end{proposition}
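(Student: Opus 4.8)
The plan is to show that the cross-interaction term $\delta E_1 E_2$ is invisible to the linearization at $x_0$, so that the local stability question for \eqref{eqn:model2} reduces verbatim to the one already settled for the cross-interaction-free system \eqref{eqn:model1}.

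First I would write down the Jacobian of \eqref{eqn:model2} at $x_0 = \left(\frac{\Lambda}{\mu},0,0,0,0\right)$. The systems \eqref{eqn:model1} and \eqref{eqn:model2} differ only by the term $-\delta E_1 E_2$ in the $E_1'$ equation and $+\delta E_1 E_2$ in the $E_2'$ equation. Since this term is quadratic in the adopter variables, its partial derivatives $\delta E_2$ and $\delta E_1$ both vanish wherever $E_1 = E_2 = 0$, and in particular at $x_0$. Consequently the Jacobian of \eqref{eqn:model2} at $x_0$ is identical to that of \eqref{eqn:model1} at $x_0$, and it suffices to reproduce the eigenvalue analysis already performed for the two-ideology model in Subsection \ref{R1, R2 and more}.

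Next I would carry out that analysis through the next generation matrix method of \cite{van2002reproduction}, treating $E_1, R_1, E_2, R_2$ as the adopter variables. The transmission terms $q_{E_i}\beta_i S R_i$ and $q_{R_i}\beta_i S R_i$ are collected into $\mathcal F$, while all remaining flows, including the cross-interaction term, are placed in $\mathcal V$. Because the cross term contributes nothing to the linearization, the matrices $F$ and $V$ evaluated at $x_0$ are block diagonal, with a $2\times 2$ block for each ideology coinciding exactly with the blocks computed in Subsection \ref{R1, R2 and more}. Hence $FV^{-1}$ is block diagonal with per-block spectral radii $\Rone$ and $\Rtwo$, so that its spectral radius equals $\max\{\Rone,\Rtwo\}$. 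Applying \cite[Theorem 2]{van2002reproduction} then yields local asymptotic stability of $x_0$ when $\max\{\Rone,\Rtwo\} < 1$ and instability when $\max\{\Rone,\Rtwo\} > 1$.

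The one point demanding care is confirming that assigning the cross term to $\mathcal V$ respects the structural hypotheses (A1)--(A5) of \cite{van2002reproduction}; this is immediate, since the term describes a transfer out of $E_1$ into $E_2$ and vanishes whenever the adopter compartments are empty. The threshold case $\max\{\Rone,\Rtwo\}=1$ is excluded by hypothesis, so no boundary analysis is needed. As an alternative that sidesteps the framework altogether, one may simply invoke the equality of the two Jacobians established in the first step, together with the local-stability conclusions already recorded for \eqref{eqn:model1} in Subsection \ref{R1, R2 and more}.
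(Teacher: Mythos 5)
Your proposal is correct and follows essentially the same route as the paper: the paper states (without written proof) that the reproduction numbers coincide with those of the $\delta=0$ model precisely because the quadratic cross term $\delta E_1 E_2$ contributes nothing to the linearization at $x_0$, and then invokes \cite[Theorem 2]{van2002reproduction} via the block-diagonal next generation matrix, exactly as you do. Your write-up simply makes explicit the verification (vanishing derivatives of the cross term, block structure of $F$ and $V$, hypotheses of the framework) that the paper leaves implicit.
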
 

\noindent
{\bf The invasion number $\Itwod$:}
To compute the invasion number $\Itwod$ of the second ideology (near $x^*$, where the first ideology is
established), we note that the variables associated with the second ideology are $E_2$ and $R_2$.  We
have
\[
 \mathcal{F} = \begin{bmatrix}
 \mathcal{F}_{E_2} \\
 \mathcal{F}_{R_2} 
 \end{bmatrix} = 
 \begin{bmatrix}
 q_{E_2} \beta_2 SR_2 	+\delta E_1 E_2 \\
 q_{R_2} \beta_2 SR_2 
 \end{bmatrix} 
\;\;\; \textrm{and} \;\;\;
\mathcal{V} = \begin{bmatrix}
 \mathcal{V}_{E_2} \\
 \mathcal{V}_{R_2}
 \end{bmatrix} = 
 \begin{bmatrix}
 (\mu + d_{E_2} + c_{E_2}) E_2 - c_{R_2} R_2 \\
  - c_{E_2} E_2 + (\mu + d_{R_2}+ c_{R_2} ) R_2
 \end{bmatrix}.
\]
Differentiating and evaluating at $x^*$, we obtain
\[
 F = \begin{bmatrix}
 \delta E_1^* & q_{E_2} \beta_2 S^* \\
 0 & q_{R_2} \beta_2 S^*
 \end{bmatrix}
\aand
V = \begin{bmatrix}
 \mu +d_{E_2} + c_{E_2} & - c_{R_2} \\
 - c_{E_2} & \mu + d_{R_2} + c_{R_2}
 \end{bmatrix},
\]
and so the next generation matrix is
\begin{equation*}
\begin{aligned}
 N &= F V^{-1} 		\\
 &= \frac{1}{D_2} 
 \begin{bmatrix}
 \delta E_1^* & q_{E_2} \beta_2 S^* \\
 0 & q_{R_2} \beta_2 S^* 
 \end{bmatrix}
 \begin{bmatrix}
 \mu +d_{R_2} + c_{R_2} & c_{R_2} \\
 c_{E_2} & \mu + d_{E_2} + c_{E_2}
 \end{bmatrix}		\\
 &= \frac{1} { D_2} \begin{bmatrix}
 \delta E_1^* (\mu + d_{R_2} + c_{R_2}) + q_{E_2} \beta_2 S^* c_{E_2}
 					& \delta E_1^* c_{R_2} + q_{E_2} \beta_2 S^* (\mu + d_{E_2} + c_{E_2}) \\
 q_{R_2} \beta_2 S^* c_{E_2} & q_{R_2} \beta_2 S^* (\mu + d_{E_2} + c_{E_2}) \end{bmatrix}	\\
 &= \begin{bmatrix}
 A & B \\
 C & D
 \end{bmatrix} ,
\end{aligned}
\end{equation*}
where $A$, $B$, $C$, and $D$ are defined by this equation.  The invasion number of the second ideology
is the spectral radius of $N$, namely, 
\begin{equation*}
\begin{aligned}
 \Itwod &= \frac{(A + D )+ \sqrt{(A + D)^2 - 4(AD-BC)}}{2}	\\
 	&= \frac{(A + D )+ \sqrt{(A - D)^2 + 4BC}}{2}.
\end{aligned}
\end{equation*}
The case where $\delta=0$ is the same as the case studied in Section \ref{Section 3}, and it can be verified that
the expression given here gives $\left. \Itwod \right|_{\delta = 0} = \frac{\Rtwo}{\Rone} $, which agrees with the
expression for $\Itwo$ given in Section \ref{Section 3}.

We can view $A$ and $B$ as strictly increasing functions of $\delta$, while $C$ and $D$ do not depend on
$\delta$.  Since $A,B,C,D$ are all strictly positive, it is easy to verify that $\frac{\partial \Itwod}{\partial A}$ and
$\frac{\partial \Itwod}{\partial B}$ are positive.  Thus, $\Itwod$ is a strictly increasing function of $\delta$, and
so
\[
 \Itwod > \frac{\Rtwo} {\Rone}
\]
for $ \delta > 0 $.  This makes sense as it suggests that larger values of $\delta$ make it ``easier" for the
second ideology to invade a population where the first ideology is already established.
%

\vspace{0.4cm}
\noindent
{\bf The invasion number $\Ioned$:}
To compute the invasion number $\Ioned$ of the first ideology (near $x^{**}$, where the second ideology is
established), we note that the variables associated with the first ideology are $E_1$ and $R_1$.  In this case,
\[
 \mathcal{F} = \begin{bmatrix}
 \mathcal{F}_{E_1} \\
 \mathcal{F}_{R_1} 
 \end{bmatrix} = 
 \begin{bmatrix}
 q_{E_1} \beta_1 SR_1 \\
 q_{R_1} \beta_1 SR_1 
 \end{bmatrix} 
\;\;\; \textrm{and} \;\;\;
 \mathcal{V} = \begin{bmatrix}
 \mathcal{V}_{E_1} \\
 \mathcal{V}_{R_1}
 \end{bmatrix} = 
 \begin{bmatrix}
 (\mu + d_{E_1} + c_{E_1}) E_1 - c_{R_1} R_1 + \delta E_1 E_2 \\
 (\mu + d_{R_1}+ c_{R_1} ) R_1 - c_{E_1} E_1 
 \end{bmatrix}.
\]
Consequently,
\[
 F = \begin{bmatrix}
 0& q_{E_1}S^{**} \beta_1 \\
 0 & q_{R_1} S^{**} \beta_1 
 \end{bmatrix}
\aand
V = \begin{bmatrix}
 \mu +d_{E_1} + c_{E_1} + \delta E_2^{**} & - c_{R_1} \\
 - c_{E_1} & \mu + d_{R_1} + c_{R_1}
 \end{bmatrix},
\]
and the next generation matrix is
\[
 N =\frac{1}{ D_1 + \delta E_2^{**} (\mu + d_{R_1} + c_{R_1})} 
 \begin{bmatrix}
 0& q_{E_1}S^{**} \beta_1 \\
 0 & q_{R_1} S^{**} \beta_1 
 \end{bmatrix}
 \begin{bmatrix}
 \mu +d_{R_1} + c_{R_1} & c_{R_1} \\
 c_{E_1} & \mu + d_{E_1} + c_{E_1} + \delta E_2^{**}
 \end{bmatrix} .
\]
Since $F$ has rank one, the same is true of $N$.  Thus, $N$ has only one non-zero eigenvalue, which is positive
and which must be equal to the trace.  So, the trace of $N$ is equal to its spectral radius, and therefore is equal to
$\Ioned$.  That is,
\begin{align*} 
 \Ioned
    & =\frac{\beta_1 S^{**} [c_{E_1} + q_{R_1} (\mu + d_{E_1}) + q_{R_1} \delta E_2^{**} ]}
 	{ D_1 + \delta E_2^{**} (\mu + d_{R_1} + c_{R_1})}	\\
    & = \frac{\beta_1 [\ccc_1 + q_{R_1} \delta E_2^{**} ]}
    	{\Gamma_2 [ D_1 + \delta E_2^{**} (\mu + d_{R_1} + c_{R_1})]}.
\end{align*}
Differentiating with respect to $\delta$ yields 
\begin{align*} 
 \frac{ d\Ioned} { d \delta}
 	&= \frac{\beta_1 E_2^{**} \left[ q_{R_1} D_1 - \ccc_1 (\mu + d_{R_1} + c_{R_1}) \right]}
			 	{\Gamma_2 [ D_1 + \delta E_2^{**} (\mu + d_{R_1} + c_{R_1})]^2}		\\
 	&= - \frac{\beta_1 E_2^{**} c_{E_1} [c_{R_1} + q_{E_1} (\mu + d_{R_1}) ]}
			 	{\Gamma_2 [ D_1 + \delta E_2^{**} (\mu + d_{R_1} + c_{R_1})]^2}		\\
 	&< 0.
\end{align*}
Thus, $\Ioned$ is a strictly decreasing function of $\delta$.  Recalling from Section \ref{Section 3} that
$\left. \Ioned \right|_{\delta=0} = \frac{\Rone} {\Rtwo}$, we have the following inequality
\[
 \Ioned < \frac{\Rone} {\Rtwo},
\]
for $\delta > 0$.   This makes sense as it suggests that larger values of $\delta$ make it ``harder" for the
first ideology to invade a population where the second ideology is already established.

The following theorem follows from \cite[Theorem 2]{van2002reproduction}, which connects the local stability
of a boundary equilibrium to the basic reproduction number (or invasion number) calculated at that equilibrium.

\begin{theorem}		\label{local stuff}
 Suppose $\Rone >1$ and $\Rtwo > 1$.  Then
 \begin{enumerate}
 \item The equilibrium $x^*$ is locally asymptotically stable if and only if $\Itwod < 1$.
 \item The equilibrium $x^{**}$ is locally asymptotically stable if and only if $\Ioned < 1$.
 \item If $\Ioned$ or $\Itwod$ is greater than $1$, then the corresponding equilibrium is unstable.
 \end{enumerate} 
\end{theorem}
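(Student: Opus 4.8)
The plan is to read statements (1)--(3) as three instances of the next-generation threshold theorem \cite[Theorem 2]{van2002reproduction}, applied not at the ideology-free equilibrium but at a boundary equilibrium where one ideology is already endemic. The guiding observation is that near $x^*$ the compartments $E_2,R_2$ of the \emph{absent} second ideology play exactly the role of ``infected'' variables, so that $x^*$ is a disease-free equilibrium relative to the second ideology; symmetrically, near $x^{**}$ the compartments $E_1,R_1$ are the infected variables. The matrices $F$ and $V$ assembled above are precisely the next-generation data for these two reduced problems, and their spectral radii are $\Itwod$ and $\Ioned$. With the hypotheses of \cite{van2002reproduction} in place, that theorem will yield ``invasion number ${}<1 \Rightarrow$ LAS'' (the ``if'' directions of (1) and (2)) and ``invasion number ${}>1 \Rightarrow$ unstable'' (statement (3)) simultaneously.

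The step requiring care, and the only genuinely non-routine point, is transplanting \cite[Theorem 2]{van2002reproduction} from the trivial equilibrium to an endemic boundary equilibrium: the theorem requires that the putative disease-free equilibrium be locally asymptotically stable \emph{within the subsystem obtained by freezing the invading variables at zero}. First I would verify this for $x^*$. Setting $E_2=R_2=0$ annihilates the cross-interaction term $\delta E_1 E_2$ in \eqref{eqn:model2} and reduces the system to the single-ideology model \eqref{eqn:model} for ideology one, whose endemic equilibrium $x^*$ is globally---hence locally---asymptotically stable whenever $\Rone>1$ by \thmref{thm:xast_stability}. Symmetrically, setting $E_1=R_1=0$ again removes the cross term and reduces the dynamics to the single-ideology model for ideology two, whose endemic equilibrium $x^{**}$ is globally asymptotically stable when $\Rtwo>1$. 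Since the hypotheses of the theorem assume $\Rone,\Rtwo>1$, both background subsystems are stable and the required condition is met in each case.

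Granting the hypotheses, the conclusion follows from the block structure of the linearization. The Jacobian of \eqref{eqn:model2} at $x^*$ is block triangular with respect to the splitting into invading variables $(E_2,R_2)$ and background variables $(S,E_1,R_1)$, because at $x^*$ every first-order dependence of the $E_2,R_2$ equations on the background variables either vanishes identically or carries a factor of $E_2$ or $R_2$ and hence vanishes there; the two diagonal blocks are $F-V$ (on the invading variables) and the Jacobian of the single-ideology-one subsystem at $x^*$ (on the background variables). The latter block is stable by the previous paragraph, so the spectrum of the full Jacobian lies in the open left half-plane if and only if that of $F-V$ does. Because $F\ge 0$ and $V$ is a non-singular $M$-matrix, $F-V$ is Metzler, and so its stability-determining eigenvalue is real and changes sign exactly as $\rho(FV^{-1})=\Itwod$ crosses $1$; this sign equivalence is the analytic core of \cite[Theorem 2]{van2002reproduction}. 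Hence $x^*$ is locally asymptotically stable precisely when $\Itwod<1$, which also supplies the ``only if'' direction, since at $\Itwod=1$ the dominant eigenvalue of $F-V$ is zero and $x^*$ fails to be hyperbolic. The identical argument with the two ideologies interchanged handles $x^{**}$ and $\Ioned$, completing (1)--(3). The hard part, to reiterate, is not any computation but the justification that the van den Driessche--Watmough machinery still applies at an endemic boundary equilibrium, which is exactly why the global stability results for the single-ideology subsystems are invoked as inputs.
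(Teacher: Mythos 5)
Your proposal is correct and is essentially the paper's own argument: the paper proves this theorem in a single sentence, by citing \cite[Theorem 2]{van2002reproduction} as the result "which connects the local stability of a boundary equilibrium to the basic reproduction number (or invasion number) calculated at that equilibrium" --- exactly the reading you adopt, with $\Itwod$ and $\Ioned$ as the spectral radii of the next generation matrices already computed in Section \ref{brnain}. What you add --- checking that freezing $E_2=R_2$ (resp.\ $E_1=R_1$) at zero kills the cross term and reduces \eqref{eqn:model2} to the single-ideology model whose endemic equilibrium is stable by \thmref{thm:xast_stability}, plus the block-triangular and Metzler/M-matrix mechanics behind the threshold --- is precisely the detail the paper leaves implicit inside that citation.
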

\begin{remark} 
We have shown that $ \Itwod > \frac{\Rtwo} {\Rone} $, and that $ \Ioned <\frac{\Rone} {\Rtwo} $.
As a consequence the region of the parameter space where the second ideology is locally asymptotically stable
is larger (in some sense) than that of the first ideology.  

A consequence of this theorem is that in order for the first ideology to eliminate the second (i.e. to out compete it),
a necessary condition is that $\Rone > \Rtwo$.  In contrast, the second ideology may be able eliminate the first one
if $\Rtwo > \Rone$, but also if $\Rtwo < \Rone$.  Thus, in the case of ``superinfection" (i.e. $\delta > 0$) the principle
that the ideology with the larger reproduction number excludes the one with the smaller reproduction number is no
longer valid.  Thus, an ideology (the second one) with suboptimal reproduction number may be able to dominate the
population.  In the presence of ``superinfection", individuals supporting ideology one may ``convert" to ideology two,
and this may give the competitive advantage to ideology two even if its reproduction number is smaller.  
\end{remark}

\subsection{A Discussion of Coexistence Equilibria}			\label{sec:coexistence}

In Section \ref{Section 3} it was shown that for $\delta = 0$, the only possible equilibria are $x_0$, $x^*$ and $x^{**}$,
and so there are no equilibria for which both ideologies are present.  For $\delta > 0$ it is possible to have an
equilibrium with both ideologies present.  We will refer to such an equilibrium as a {\it coexistence equilibrium}.
(Thus far, we have found no evidence of multiple coexistence equilibria.)

In this section, we will sometimes assume that $\Rone$, $\Rtwo$ or both are greater than one, so that $x^*$, $x^{**}$
or both exist and hence the invasion numbers $\Ioned$, $\Itwod$ or both are defined (and positive).  As was discussed
in the previous subsection, whether or not each of $\Ioned$ and $\Itwod$ is greater than $1$ affects the qualitative
behaviour of the system, in part by determining the local stabilities of $x^*$ and $x^{**}$.  We will now show that these
quantities also affect other properties of the system.

We will consider $\delta$ as a bifurcation parameter.  Let $\deltaone$ and $\deltatwo$, respectively, be the values of
$\delta$ that make $\Itwod$ and $\Ioned$ equal to $1$ (if such a value exists).  (Thus, $\deltaone$ arises from
a calculation involving $x^*$, while $\deltatwo$ arises from a calculation involving $x^{**}$.)  Then
\begin{equation*}
\deltaone =  \frac{1}{E_1^*}
		\frac{\ccc_2 D_2}
			{q_{R_2} D_2 \left( \Rone - \Rtwo \right) + c_{E_2} \left( c_{R_2} + q_{E_2} \left( \mu + d_{R_2} \right) \right)}
		\left( \Rone - \Rtwo \right)
\end{equation*}
and
\begin{equation*}
\deltatwo =  \frac{1}{E_2^{**}}
		\frac{\ccc_1 D_1}
			{q_{R_1} D_1 \left( \Rtwo - \Rone \right) + c_{E_1} \left( c_{R_1} + q_{E_1} \left( \mu + d_{R_1} \right) \right)}
		\left( \Rone - \Rtwo \right).
\end{equation*}

Suppose $\Rone > 1$.  Then $x^*$ exists and $\Itwod > 0$.  A sufficient (but not necessary) condition for
$\deltaone$ to be positive is $\Rtwo < \Rone$.  If $\delta = \deltaone > 0$, then $\Itwod$ is equal to $1$, and the Jacobian
at $x^*$ has $0$ as an eigenvalue.  If $\delta$ increases through $\deltaone$, then the stability of $x^*$ changes.  This
is the only value of $\delta$ that could lead to a coexistence equilibrium having a bifurcation interaction with $x^*$ (also
likely through a saddle node bifurcation).

Suppose $\Rtwo > 1$.  Then $x^{**}$ exists and $\Ioned > 0$.  Note that a necessary condition for $\deltatwo$ to be
positive is $\Rtwo < \Rone$.  If $\delta = \deltatwo > 0$, then $\Ioned$ is equal to $1$, and the Jacobian at $x^{**}$ has
$0$ as an eigenvalue.  If $\delta$ increases through $\deltatwo$, then the stability of $x^{**}$ changes from unstable
(for smaller values of $\delta$) to locally asymptotically stable (for larger values of $\delta$).  This is the only value of
$\delta$ that could lead to a coexistence equilibrium having a bifurcation interaction with $x^{**}$ (likely through a
saddle node bifurcation).

If $\deltaone$ and $\deltatwo$ are both positive (and distinct), then as $\delta$ increases from very small values to very
large values it will first pass through the smaller of $\deltaone$ and $\deltatwo$, and then through the larger.
This corresponds to a coexistence equilibrium entering the positive orthant by passing through one of $x^*$ and
$x^{**}$ and then leaving the positive orthant by passing through the other.

If $\deltaone, \deltatwo > 0$, then let
\begin{equation*}
\sigma = \deltatwo - \deltaone
\end{equation*}
(otherwise $\sigma$ is left undefined).  Depending
on the system parameters, it is possible to have $\sigma$ positive or negative (or zero on a set of measure zero).  The 
sign of $\sigma$ determines whether the coexistence equilibrium (which is absent for $\delta = 0$) appears in $\Rgez^5$
(as $\delta$ is increased) by passing through $x^*$ and leaves by passing through $x^{**}$, or the opposite.  That is, the
sign of $\sigma$ determines the large scale motion of the coexistence equilibrium.  Numerical evidence (not included here)
shows that it is possible to have $\sigma$ being either positive or negative.


We now consider different possible outcomes that may arise.  In this discussion it is useful to recall from
Section \ref{brnain} that $\frac{\partial \Ioned}{\partial \delta} < 0$ and $\frac{\partial \Itwod}{\partial \delta} > 0$.

\vspace{0.5cm} \noindent {\bf Situation 1:}
Suppose $1 < \Rone < \Rtwo$.  Under this assumption we have:
\begin{itemize}
	\item $x^*$ and $x^{**}$ both exist.
	\item For $\delta = 0$, we have:
	\begin{itemize}
		\item $\Itwod = \frac{\Rtwo}{\Rone} > 1$ and so $x^*$ is unstable.
		\item $\Ioned = \frac{\Rone}{\Rtwo} < 1$ and so $x^{**}$ is locally asymptotically stable.
		\item By Theorem \ref{3 GAS}, $x^{**}$ is globally asymptotically stable.
	\end{itemize}
\end{itemize}

Since $\Ioned$ and $\Itwod$ are monotone functions of $\delta$, it follows that $\Ioned < 1 < \Itwod$
for all $\delta > 0$.  Thus, $x^*$ is unstable and $x^{**}$ is locally asymptotically stable for all
$\delta > 0$, and there is no coexistence equilibrium.
It is plausible that $x^{**}$ is globally asymptotically stable for all $\delta > 0$.
(See Section \ref{subs: global 4}.)

%

\vspace{0.5cm} \noindent {\bf Situation 2:}
Suppose $1 < \Rtwo < \Rone$.  Under this assumption we have:
\begin{itemize}
	\item $x^*$ and $x^{**}$ both exist.
	\item For $\delta = 0$, we have:
	\begin{itemize}
		\item $\Ioned = \frac{\Rone}{\Rtwo} > 1$ and so $x^{**}$ is unstable.
		\item $\Itwod = \frac{\Rtwo}{\Rone} < 1$ and so $x^*$ is locally asymptotically stable.
		\item By Theorem \ref{3 GAS}, $x^*$ is globally asymptotically stable.
	\end{itemize}
	\item $\deltaone > 0$.
	\item $\deltatwo$ could be positive or negative.
\end{itemize}

In order to determine how the behaviour of the system changes as $\delta$ increases, it is necessary to consider
the size of $\delta$ relative to $\deltaone$ and $\deltatwo$.  Hence, we consider three cases based on how $\deltatwo$
compares to $0$ and $\deltaone$.

\vspace{0.5cm} \noindent {\bf Case 2A:}
Suppose $\deltatwo < 0$.  Then:
\begin{itemize}
\item $\Ioned > 1$ for all $\delta > 0$, and so $x^{**}$ is unstable for all $\delta > 0$, and the first ideology
		is always able to invade near $x^{**}$.
\item If $0 \le \delta < \deltaone$, then $x^*$ is locally asymptotically stable and the second ideology is unable
		to invade near $x^*$.  There is no coexistence equilibrium.
\item If $\deltaone < \delta$, then $x^*$ is unstable and the second ideology can invade near $x^*$.  With both
		$x^*$ and $x^{**}$ being unstable, there is a locally asymptotically stable coexistence equilibrium.
\end{itemize}
In this case, as $\delta$ increases from $0$ to very large values, a stable coexistence equilibrium appears by
passing through $x^*$.

\vspace{0.5cm} \noindent {\bf Case 2B:}
Suppose $0 < \deltatwo < \deltaone$, so that $\sigma < 0$.  Then:

\begin{itemize}
\item If $0 \le \delta < \deltatwo$, then $x^*$ is locally asymptotically stable and the second ideology is unable
		to invade near $x^*$, whereas $x^{**}$ is unstable and so the first ideology can invade near $x^{**}$.
		There is no coexistence equilibrium.
\item If $\deltatwo < \delta < \deltaone$, then $x^*$ and $x^{**}$ are both locally asymptotically stable and so
		neither ideology can invade near the equilibrium where the other ideology is dominant.  In this setting,
		there is an unstable coexistence equilibrium.
\item If $\deltaone < \delta$, then $x^*$ is unstable and $x^{**}$ is locally asymptotically unstable.
		There is no coexistence equilibrium.
\end{itemize}
In this case, as $\delta$ increases from $0$ to very large values, an unstable coexistence equilibrium appears by
passing through $x^{**}$, traverses through the positive orthant and disappears by passing through $x^*$.

\vspace{0.5cm} \noindent {\bf Case 2C:}
Suppose $\deltaone < \deltatwo$, so that $\sigma > 0$.  Then:

\begin{itemize}
\item If $0 \le \delta < \deltaone$, then $x^*$ is locally asymptotically stable and the second ideology is unable
		to invade near $x^*$, whereas $x^{**}$ is unstable and so the first ideology can invade near $x^{**}$.
		There is no coexistence equilibrium.
\item If $\deltaone < \delta < \deltatwo$, then $x^*$ and $x^{**}$ are both unstable and so each ideology can invade
		near the equilibrium where the other ideology is dominant.  In this setting, there is a stable coexistence
		equilibrium.
\item If $\deltatwo < \delta$, then $x^*$ is unstable and $x^{**}$ is locally asymptotically unstable.  There is no
		coexistence equilibrium.
\end{itemize}
In this case, as $\delta$ increases from $0$ to very large values, a stable coexistence equilibrium appears by
passing through $x^*$, traverses through the positive orthant and disappears by passing through $x^{**}$.

\vspace{0.2cm}
For small values of $\delta$, Cases 2B and 2C yield similar phase spaces in terms of the stability of $x^*$ and $x^{**}$,
as well as in the lack of a coexistence equilibrium.  This is also true for large values of $\delta$.  The difference arises
when $\delta$ lies between $\deltaone$ and $\deltatwo$, in which case the sign of $\sigma$ determines the local stability
of the coexistence equilibrium as well as it's general direction of motion - from $x^*$ to $x^{**}$ or the opposite.

\vspace{0.5cm} \noindent {\bf Situation 3:}
Suppose $\Rtwo < 1 < \Rone$.  Under this assumption we have:
\begin{itemize}
	\item $x^*$ exists - and so $\Itwod$ is defined and positive.
	\item $x^{**}$ does not exist - and so $\Ioned$ is not defined and neither is $\deltatwo$.
	\item For $\delta = 0$, we have:
	\begin{itemize}
		\item $\Itwod = \frac{\Rtwo}{\Rone} < 1$ and so $x^*$ is locally asymptotically stable.
		\item By Theorem \ref{3 GAS}, $x^*$ is globally asymptotically stable.
	\end{itemize}
	\item $\deltaone > 0$.
\end{itemize}

As $\delta$ is varied, we have:
\begin{itemize}
	\item If $0 \le \delta < \deltaone$, then $x^*$ is locally asymptotically stable.
	\item If $\deltaone < \delta$, then $x^*$ is unstable.  Then the second ideology is able to invade near $x^*$,
		even though it is unable to invade near $x_0$.
		(It would appear that the second ideology is more attractive to adherents of the first ideology than it is
		to individuals in susceptible group.)
		A locally asymptotically stable coexistence equilibrium exists.
\end{itemize}
In this situation, as $\delta$ increases, a stable coexistence equilibrium appears, and then remains no matter how large
$\delta$ becomes.

\vspace{0.5cm} \noindent {\bf Situation 4:}
Suppose $\Rone < 1 < \Rtwo$.  Under this assumption we have:
\begin{itemize}
	\item $x^{**}$ exists - and so $\Ioned$ is defined and positive.
	\item $x^*$ does not exist - and so $\Itwod$ is not defined and neither is $\deltaone$.
	\item For $\delta = 0$, we have:
	\begin{itemize}
		\item $\Ioned = \frac{\Rone}{\Rtwo} < 1$ and so $x^{**}$ is locally asymptotically stable.
		\item By Theorem \ref{3 GAS}, $x^{**}$ is globally asymptotically stable.
	\end{itemize}
	\item $\deltatwo < 0$.
\end{itemize}
In this situation, there is no qualitative change regarding the equilibria and their local behaviours as
$\delta$ varies from $0$ to $\infty$.

\begin{remark}
From numerical experiments, it appears that when the coexistence equilibrium exists, it is unique.
\end{remark}

%
%
%

\subsection{Global stability}			\label{subs: global 4}

\begin{theorem}
If $\Rone, \Rtwo \le 1$, then $x_0$ is globally asymptotically stable (for all $\delta \ge 0$).
\end{theorem}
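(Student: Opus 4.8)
The plan is to reuse the Lyapunov argument from the two-ideology model without cross-interaction, adding a scaling factor on the first block and invoking the attracting property of $\widetilde\Delta$ from Proposition~\ref{prop:invariant_region2} to absorb the new term. As before I would take the function built from the two single-ideology blocks,
\[
U = S_0\, g\!\left(\frac{S}{S_0}\right) + \lambda\left[\frac{c_{E_1}}{\ccc_1} E_1 + \frac{\mu + d_{E_1}+c_{E_1}}{\ccc_1} R_1\right] + \frac{c_{E_2}}{\ccc_2} E_2 + \frac{\mu + d_{E_2}+c_{E_2}}{\ccc_2} R_2,
\]
where $S_0 = \Lambda/\mu$, $g$ is as in \eqref{g}, and $\lambda \ge 1$ is a scaling constant fixed below. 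The weights inside each block are exactly those that cancelled the $E_i$-terms and gave the $SR_i$-coefficient $\beta_i$ in the single-ideology computation (the new $\delta E_1 E_2$ terms contribute nothing to the coefficient of $E_1$ or $E_2$ alone), so redoing that computation with the cross-interaction terms retained I expect
\[
U' = -\mu\frac{(S-S_0)^2}{S} + \beta_1(\lambda-1)SR_1 + \left(\beta_1 S_0 - \lambda\frac{D_1}{\ccc_1}\right)R_1 + \frac{D_2}{\ccc_2}\left(\Rtwo - 1\right)R_2 + \delta\left(\frac{c_{E_2}}{\ccc_2} - \lambda\frac{c_{E_1}}{\ccc_1}\right)E_1 E_2.
\]

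The main obstacle is the last term. With the unscaled function ($\lambda=1$) it is $\delta\big(\tfrac{c_{E_2}}{\ccc_2}-\tfrac{c_{E_1}}{\ccc_1}\big)E_1 E_2$, whose sign is not controlled and may be positive, so the naive choice fails. The key idea is to kill it by scaling the first block: choosing $\lambda$ large enough that $\lambda\,\tfrac{c_{E_1}}{\ccc_1}\ge \tfrac{c_{E_2}}{\ccc_2}$ (e.g.\ $\lambda=\max\{1,\;c_{E_2}\ccc_1/(c_{E_1}\ccc_2)\}$) forces the $E_1E_2$-coefficient to be $\le 0$ for every $\delta\ge 0$. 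Scaling up the first block produces one unwanted positive term, $\beta_1(\lambda-1)SR_1$, and this is precisely where I would use that $\widetilde\Delta$ is attracting, so that it suffices to prove $U'\le 0$ on $\widetilde\Delta$, where $S\le T\le \Lambda/\mu = S_0$. On that region $\beta_1(\lambda-1)SR_1\le \beta_1(\lambda-1)S_0 R_1$, and combining with the linear $R_1$-term collapses the first-ideology contribution to $\lambda\frac{D_1}{\ccc_1}(\Rone-1)R_1\le 0$ since $\Rone\le 1$. The other terms are visibly non-positive, since $-\mu(S-S_0)^2/S\le 0$ and $\frac{D_2}{\ccc_2}(\Rtwo-1)R_2\le 0$ because $\Rtwo\le 1$; hence $U'\le 0$ throughout $\widetilde\Delta$ for all $\delta\ge 0$.

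To finish I would apply LaSalle's Invariance Principle on $\widetilde\Delta$. The condition $U'=0$ forces $S=S_0$; substituting into the $S$-equation gives $S'=-\beta_1 S_0 R_1-\beta_2 S_0 R_2$, so invariance of $\{S=S_0\}$ requires $R_1=R_2=0$, whereupon $R_1'=c_{E_1}E_1$ and $R_2'=c_{E_2}E_2$ force $E_1=E_2=0$. Thus the largest invariant subset of $\{U'=0\}$ is $\{x_0\}$, every solution in $\widetilde\Delta$ converges to $x_0$, and since $\widetilde\Delta$ is attracting within $\Rgez^5$ this gives global asymptotic stability on $\Rgez^5$. I expect the only delicate points to be bookkeeping: checking that the block weights still cancel the $E_i$-terms in the presence of the $\delta E_1 E_2$ contributions, and noting that when $\tfrac{c_{E_1}}{\ccc_1}\ge \tfrac{c_{E_2}}{\ccc_2}$ already holds one may simply take $\lambda=1$, in which case the term $\beta_1(\lambda-1)SR_1$ vanishes and the attracting-region step is not even needed.
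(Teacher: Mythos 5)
Your computation of $U'$ is correct and your argument goes through, but it takes a somewhat different route from the paper's. The paper discards the $S$-block entirely and works with the purely linear function
\[
U = E_1 + \tfrac{\mu + d_{E_1} + c_{E_1}}{c_{E_1}} R_1 + E_2 + \tfrac{\mu + d_{E_2} + c_{E_2}}{c_{E_2}} R_2,
\]
i.e.\ each Section~3 block rescaled by $\ccc_i/c_{E_i}$ so that $E_1$ and $E_2$ both carry coefficient $1$. With that normalization the cross terms $-\delta E_1 E_2$ and $+\delta E_1 E_2$ cancel \emph{identically}, so no scaling constant $\lambda$, no sign condition relating $\frac{c_{E_2}}{\ccc_2}$ and $\frac{c_{E_1}}{\ccc_1}$, and no case analysis is needed; the derivative collapses to
\[
U' = \frac{D_1}{c_{E_1}}\left(\Rone\frac{S}{S_0}-1\right)R_1 + \frac{D_2}{c_{E_2}}\left(\Rtwo\frac{S}{S_0}-1\right)R_2,
\]
which is $\le 0$ on $\widetilde\Delta$ because $S \le S_0$ there. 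So both proofs exploit the attracting set $\widetilde\Delta$ in the same way, and in fact your choice $\lambda = c_{E_2}\ccc_1/(c_{E_1}\ccc_2)$ (when it exceeds $1$) equalizes the $E_1$- and $E_2$-coefficients, reproducing the paper's exact cancellation up to an overall factor --- the two functions are close relatives. The trade-offs: the paper's linear $U$ is finite and continuous on all of $\mathbb{R}^5_{\geq 0}$, so LaSalle applies directly on the compact invariant set $\widetilde\Delta$, whereas your $S_0\, g(S/S_0)$ term blows up at $S=0$; strictly speaking your version needs the paper's Section~2 remark ($\left. S'\right|_{S=0}=\Lambda>0$, hence $\liminf_{t\to\infty}S(t)>0$) before LaSalle can be invoked for initial conditions on that face of $\mathbb{R}^5_{\geq 0}$. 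Conversely, your retained $-\mu(S-S_0)^2/S$ term pins $S=S_0$ at equality, which makes identifying the largest invariant set marginally more direct, while the paper must rule out invariance of $\{R_1=R_2=0\}$ away from $x_0$ through the $S$-dynamics.
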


\begin{proof}
By Proposition \ref{prop:invariant_region2}, the attractor for the system must be contained in the set 
$\widetilde \Delta$, and so it is sufficient to consider a Lyapunov function on this set.  In particular, we
may assume that the variables are non-negative and satisfy $S + E_1 + R_1 + E_2 + R_2 \le S_0$,
where $S_0 = \frac{\Lambda}{\mu}$.

Let
\begin{equation*}
U = E_1 + \frac{\mu + d_{E_1} + c_{E_1}}{c_{E_1}} R_1 + E_2 + \frac{\mu + d_{E_2} + c_{E_2}}{c_{E_2}} R_2.
\end{equation*}
Then a straight-forward calculation produces
\begin{equation*}
\begin{aligned}
U' & = E_1' + \frac{\mu + d_{E_1} + c_{E_1}}{c_{E_1}} R_1' + E_2' + \frac{\mu + d_{E_2} + c_{E_2}}{c_{E_2}} R_2'	\\
&= \frac{D_1}{c_{E_1}} \left( \frac{\ccc_1}{D_1} \beta_1 S - 1 \right) R_1
		+ \frac{D_2}{c_{E_2}} \left( \frac{\ccc_2}{D_2} \beta_2 S - 1 \right) R_2		\\
&= \frac{D_1}{c_{E_1}} \left( \Rone \frac{S}{S_0} - 1 \right) R_1 + \frac{D_2}{c_{E_2}} \left( \Rtwo \frac{S}{S_0} - 1 \right) R_2.
\end{aligned}
\end{equation*}
On the set $\widetilde \Delta$, this is less than or equal to zero, with equality only if $R_1 = R_2 = 0$.  The largest
invariant subset of $\widetilde \Delta$ that satisfies $R_1 = R_2 = 0$ is the singleton consisting of $x_0$.  Thus, by
LaSalles Invariance Principle, $x_0$ is globally asymptotically stable.
\end{proof}

In order for one of $x^*$ and $x^{**}$ to be globally asymptotically stable, it is necessary for that equilibrium
to be locally asymptotically stable and for the other to be unstable.  Thus, the following result follows from
Theorem \ref{local stuff} and from the comparisons between the invasion numbers and the ratios of the
reproduction numbers.

\begin{proposition}
Suppose $\delta > 0$.
\begin{enumerate}
\item In order for $x^*$ to be globally asymptotically stable, a necessary condition is that
	$\frac{\Rtwo} {\Rone} < \Itwod < 1 < \Ioned < \frac{\Rone} {\Rtwo}$.
\item In order for $x^{**}$ to be globally asymptotically stable, a necessary condition is that
	$\Ioned < \min \left\{ 1, \frac{\Rone}{\Rtwo} \right\}$ and $\max \left\{ 1, \frac{\Rtwo} {\Rone} \right\} < \Itwod$.
\end{enumerate}
\end{proposition}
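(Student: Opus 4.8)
The plan is to read both statements as consequences of the principle recalled just before the proposition: a globally asymptotically stable equilibrium must itself be locally asymptotically stable, while any other equilibrium present in the domain must be unstable (otherwise that other equilibrium would attract an open set, contradicting global attraction of the first). Since the stated conditions reference both invasion numbers $\Ioned$ and $\Itwod$, these quantities must be defined, so I am implicitly in the regime $\Rone, \Rtwo > 1$, where $x^*$ and $x^{**}$ both exist and \thmref{local stuff} applies. The two ingredients I would combine are: (i) the local-stability dichotomy of \thmref{local stuff}, that $x^*$ is LAS iff $\Itwod < 1$ and $x^{**}$ is LAS iff $\Ioned < 1$ (with the corresponding equilibrium unstable once the invasion number exceeds $1$); and (ii) the monotonicity comparisons of \secref{brnain}, that for every $\delta > 0$ one has $\Itwod > \frac{\Rtwo}{\Rone}$ and $\Ioned < \frac{\Rone}{\Rtwo}$.

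For part (1), I would assume $x^*$ is globally asymptotically stable. Then $x^*$ is in particular LAS, so \thmref{local stuff} forces $\Itwod < 1$; combining with (ii) gives $\frac{\Rtwo}{\Rone} < \Itwod < 1$. Moreover, because $x^*$ is globally asymptotically stable, the equilibrium $x^{**}$ must be unstable, which by \thmref{local stuff} requires $\Ioned > 1$; combining with (ii) gives $1 < \Ioned < \frac{\Rone}{\Rtwo}$. Concatenating the two chains yields $\frac{\Rtwo}{\Rone} < \Itwod < 1 < \Ioned < \frac{\Rone}{\Rtwo}$, as claimed (and, as a byproduct, $\Rone > \Rtwo$).

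For part (2), I would assume $x^{**}$ is globally asymptotically stable. Then $x^{**}$ is LAS, giving $\Ioned < 1$, and together with $\Ioned < \frac{\Rone}{\Rtwo}$ from (ii) this yields $\Ioned < \min\{1, \frac{\Rone}{\Rtwo}\}$. Likewise $x^*$ must be unstable, giving $\Itwod > 1$, and together with $\Itwod > \frac{\Rtwo}{\Rone}$ from (ii) this yields $\Itwod > \max\{1, \frac{\Rtwo}{\Rone}\}$. The $\min$ and $\max$ are precisely what let one statement cover both subcases: when $\Rone > \Rtwo$ the ``$1$'' is the active bound, while when $\Rone < \Rtwo$ the reproduction-number ratios are active and, by (ii), the condition holds automatically for every $\delta > 0$.

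The main obstacle I anticipate is the borderline, non-hyperbolic case in which an invasion number equals exactly $1$, that is $\delta = \deltatwo$ (where $\Ioned = 1$) or $\delta = \deltaone$ (where $\Itwod = 1$), since \thmref{local stuff} resolves local stability only when the invasion number is strictly above or below $1$. To justify the strict inequalities $1 < \Ioned$ in part (1) and $1 < \Itwod$ in part (2), I would note that at such a value the relevant boundary equilibrium has a zero eigenvalue and a coexistence equilibrium is crossing the boundary orthant (as described in \secref{sec:coexistence}), so that boundary equilibrium cannot be a global attractor; away from this measure-zero set the dichotomy of \thmref{local stuff} delivers the strict inequalities directly. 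Every remaining step is an immediate substitution, so this borderline bookkeeping is the only place requiring genuine care.
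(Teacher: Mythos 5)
Your proposal is correct and takes essentially the same route as the paper: the paper's entire proof consists of the remark that global asymptotic stability of one of $x^*$, $x^{**}$ forces that equilibrium to be locally asymptotically stable and the other to be unstable, and then cites Theorem~\ref{local stuff} together with the comparisons $\Itwod > \frac{\Rtwo}{\Rone}$ and $\Ioned < \frac{\Rone}{\Rtwo}$, exactly as you do. Your added treatment of the borderline case is more care than the paper itself takes (the paper silently passes from ``unstable'' to a strict inequality); the only caution is that your patch, as worded, argues that the non-hyperbolic boundary equilibrium cannot be a global attractor, whereas what actually needs ruling out is that the \emph{other} equilibrium is globally asymptotically stable while $\Ioned$ (resp.\ $\Itwod$) equals $1$.
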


Regarding the minimum and maximum that appear in the second statement of this proposition,
either both are equal to $1$ or neither is equal to $1$.

\begin{theorem}		\label{thm:GAS}
Suppose $\Rtwo > \max \left\{ 1, \Rone \right\}$ and $\frac{c_{E_2}}{\ccc_2} < \frac{c_{E_1}}{\ccc_1}$.
Then $x^{**}$ is globally asymptotically stable.
\end{theorem}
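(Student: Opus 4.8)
The plan is to recycle the Lyapunov function $W = W_1 + W_2$ built in the proof of Theorem \ref{3 GAS} for the case $\Rtwo > \max\{1,\Rone\}$, with the same constants $A,B$, but now to differentiate it along the cross-interaction system \eqref{eqn:model2} rather than \eqref{eqn:model1}. The two systems have identical equations for $S'$, $R_1'$ and $R_2'$ and differ only through the terms $\mp\delta E_1 E_2$ appearing in $E_1'$ and $E_2'$. Consequently $W' = \left.W'\right|_{\delta=0} + (\text{residual})$, where $\left.W'\right|_{\delta=0}$ is exactly the right-hand side of \eqref{3W'}, and the whole task reduces to tracking the two extra contributions generated by $\delta$ and showing they do not spoil the sign.

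First I would recompute $W_1'$. Since $W_1 = \frac{c_{E_1}}{\ccc_1}E_1 + \frac{\mu+d_{E_1}+c_{E_1}}{\ccc_1}R_1$ does not involve $S,E_2,R_2$, only the $-\delta E_1 E_2$ term in $E_1'$ is new, and it adds precisely $-\delta\frac{c_{E_1}}{\ccc_1}E_1 E_2$, so that
\[
W_1' = \beta_1 S R_1 - \frac{D_1}{\ccc_1}R_1 - \delta\frac{c_{E_1}}{\ccc_1}E_1 E_2.
\]
Next I would recompute $W_2'$. Because $W_2$ contains the summand $A E_2^{**} g\!\left(E_2/E_2^{**}\right)$, whose derivative multiplies $E_2'$ by $A\left(1 - E_2^{**}/E_2\right)$, the new $+\delta E_1 E_2$ term in $E_2'$ adds $A\delta E_1\left(E_2 - E_2^{**}\right)$ to the expression for $W_2'$ obtained in Theorem \ref{3 GAS}; every other term is unchanged.

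Adding the two derivatives, the non-$\delta$ part equals the right-hand side of \eqref{3W'}, which is already $\le 0$ by the arithmetic mean--geometric mean inequality together with $\frac{\Rone}{\Rtwo} < 1$. The residual $\delta$-contribution is
\[
-\delta\frac{c_{E_1}}{\ccc_1}E_1 E_2 + A\delta E_1\left(E_2 - E_2^{**}\right) = \delta E_1 E_2\left(A - \frac{c_{E_1}}{\ccc_1}\right) - A\delta E_1 E_2^{**}.
\]
The crux, and the main obstacle, is to pin down the sign of the first summand. Solving the two defining relations $A q_{E_2} + B q_{R_2} = 1$ and $B c_{E_2} E_2^{**} = A c_{R_2}R_2^{**} + A q_{E_2}\beta_2 S^{**}R_2^{**}$ for $A$, using the equilibrium identity $(\mu+d_{E_2}+c_{E_2})E_2^{**} = q_{E_2}\beta_2 S^{**}R_2^{**} + c_{R_2}R_2^{**}$ and the simplification $c_{E_2}q_{E_2} + (\mu+d_{E_2}+c_{E_2})q_{R_2} = \ccc_2$, yields the clean value $A = \frac{c_{E_2}}{\ccc_2}$. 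Hence the coefficient $A - \frac{c_{E_1}}{\ccc_1} = \frac{c_{E_2}}{\ccc_2} - \frac{c_{E_1}}{\ccc_1}$ is exactly the quantity assumed negative in the hypothesis, so the first summand is $\le 0$, while $-A\delta E_1 E_2^{**}\le 0$ is immediate. Therefore $W' \le 0$ for all $\delta \ge 0$.

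Finally I would invoke LaSalle's Invariance Principle. Equality $W' = 0$ forces $S = S^{**}$ and $E_2/E_2^{**} = R_2/R_2^{**}$ from the mean-inequality terms, $R_1 = 0$ from the strictly negative $\frac{D_1}{\ccc_1}\!\left(\frac{\Rone}{\Rtwo}-1\right)R_1$ term, and $E_1 = 0$ from the term $-A\delta E_1 E_2^{**}$ (using $\delta > 0$ and $E_2^{**}>0$). The set $\{E_1 = R_1 = 0\}$ is invariant, and on it \eqref{eqn:model2} collapses to the single-ideology model \eqref{eqn:model} for ideology two; by the argument of Theorem \ref{thm:xast_stability} applied to ideology two (with $\Rtwo>1$), the largest invariant subset on which $S$ stays equal to $S^{**}$ is the singleton $\{x^{**}\}$. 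Thus $x^{**}$ is globally asymptotically stable. When $\delta = 0$ the conclusion is already contained in Theorem \ref{3 GAS}, so the extra hypothesis $\frac{c_{E_2}}{\ccc_2} < \frac{c_{E_1}}{\ccc_1}$ is needed only to absorb the cross-interaction term for $\delta > 0$.
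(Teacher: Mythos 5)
Your proposal is correct and follows essentially the same route as the paper's proof: the same Lyapunov function and constants from Theorem \ref{3 GAS}, the same identification $A = \frac{c_{E_2}}{\ccc_2}$ via the equilibrium relation at $x^{**}$, the same residual term $\delta E_1\left[\left(A - \frac{c_{E_1}}{\ccc_1}\right)E_2 - A E_2^{**}\right]$ made nonpositive by the hypothesis, and LaSalle's Invariance Principle to conclude. Your observation that for $\delta>0$ the equality case directly forces $E_1=0$ is a slight refinement of the paper's invariance argument (which deduces $E_1=0$ from $R_1\equiv 0$ on the invariant set), but the substance is the same.
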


\begin{proof}
We begin by interpreting the Lyapunov calculation from the proof of Theorem \ref{3 GAS} as a calculation
for system (\ref{eqn:model2}) for the special case $\delta=0$.  We use the same Lyapunov function
\begin{equation*}
W = S^{**} g \left( \frac{S}{S^{**}}\right)
	+ \frac{c_{E_1}}{\ccc_1} E_1 + \frac{\mu + d_{E_1}+ c_{E_1}}{\ccc_1} R_1
			+ A E_2^{**} g \left( \frac{E_2}{E_2^{**}} \right)
			+ B R_2^{**} g\left( \frac{R_2}{R_2^{**}} \right),
\end{equation*}
where $A$ and $B$ are again chosen so that
\begin{equation*}
A q_{E_2} + B q_{R_2} = 1
\aand
B c_{E_2} E_2^{**} = A c_{R_2} R_2^{**} + A q_{E_2} \beta_2 S^{**} R_2^{**}.
\end{equation*}
Note that at $x^{**}$, the equation for $E_2'=0$ implies
$c_{R_2} R_2^{**} + q_{E_2} \beta_2 S^{**} R_2^{**} = \left( \mu + d_{E_2} + c_{E_2} \right) E_2^{**}$,
and so it follows that $A = \frac{c_{E_2}}{\ccc_2}$ and $B = \frac{\mu + d_{E_2} + c_{E_2}}{\ccc_2}$.

Let $\left( s, e_2, r_2 \right) = \left( \frac{S \;}{S^{**}}, \frac{E_2 \;}{E_2^{**}}, \frac{R_2 \;}{R_2^{**}}\right)$.
Then \eqref{3W'} gives an expression for $W'$ for the case where $\delta=0$.  By replicating the calculation
that produced \eqref{3W'}, for a general $\delta \ge 0$, we obtain
\begin{equation*}
\begin{aligned}
W' &= \mu S^{**} \left[ 2 - s - \frac{1}{s} \right] + A c_{R_2} R_2^{**} \left[ 2 - \frac{e_2} {r_2} - \frac{r_2} {e_2} \right]	\\
 	&\qquad + \beta_2 S^{**} R_2^{**} \left[ A q_{E_2} \left( 3 - \frac{1} {s} - \frac{e_2}{r_2} - \frac{s r_2} {e_2} \right)
					+ B q_{R_2} \left( 2 - s - \frac{1} {s} \right) \right]								\\
	&\qquad + \frac{D_1}{\ccc_1} \left( \frac{\Rone}{\Rtwo} - 1 \right) R_1								\\
	&\qquad + \delta E_1 E_2 \left[ - \frac{c_{E_1}}{\ccc_1} + A \left( 1 - \frac{E_2^{**}}{E_2 \;} \right) \right]
\end{aligned}
\end{equation*}
We now focus on the final term, which we call $\tilde \delta$:
\begin{equation*}
\begin{aligned} 
\tilde \delta &= \delta E_1 E_2 \left[ - \frac{c_{E_1}}{\ccc_1} + A \left( 1 - \frac{E_2^{**}}{E_2 \;} \right) \right]	\\
	&= \delta E_1 \left[ \left( A - \frac{c_{E_1}}{\ccc_1} \right) E_2 - A E_2^{**} \right].
\end{aligned}
\end{equation*}
Recalling that $A = \frac{c_{E_2}}{\ccc_2}$, it follows that the condition $\frac{c_{E_2}}{\ccc_2} < \frac{c_{E_1}}{\ccc_1}$
implies $\tilde \delta$ is less than or equal to zero.  Then, by combining the assumption that
$\Rtwo > \max \left\{ 1, \Rone \right\}$ with the Arithmetic Mean-Geometric Mean Inequality, it follows that
$W' \le 0$ on $\Rgtz^5$ with equality only if $s=1$ and $R_1=0$.  Using LaSalle's Invariance Principle, in
a similar manner to how it was used in the proofs of Theorem \ref{thm:xast_stability} and Theorem \ref{3 GAS},
it follows that $x^{**}$ is globally asymptotically stable in $\Rgtz^5$.
%
\end{proof}

\begin{remark}
In the preceding proof, it is sufficient to restrict our analysis to the set $\widetilde \Delta$, in which
$E_2 \le \frac{\Lambda}{\mu}$.  Thus, in $\widetilde \Delta$, $\tilde \delta \le 0$ (and therefore $W' \le 0$)
as long as $A \le \frac{\frac{\Lambda}{\mu}}{\frac{\Lambda}{\mu} - E_2^{**}} \frac{c_{E_1}}{\ccc_1}
= \frac{1}{1 - \frac{\mu}{\Lambda}E_2^{**}}\frac{c_{E_1}}{\ccc_1}$ (which is larger than $\frac{c_{E_1}}{\ccc_1}$).
This allows Theorem \ref{thm:GAS} to be restated with less restrictive (but messier) conditions.
\end{remark}

\begin{remark}
The portion of the parameter space for which Theorem \ref{thm:GAS} applies is a subset of the parameters
that result in Situations 1 and 4 in Section \ref{sec:coexistence}.  In Case 1B, we saw that if $1 < \Rone < \Rtwo$
and $0 < \deltaone < \delta$, then $x^{**}$ is not globally asymptotically stable, and so
Theorem \ref{thm:GAS} must not apply.  In that case, the conversion from ideology one to ideology two
(i.e. having $\delta > 0$) has fundamentally changed the global dynamics from how they were with
$\delta=0$.
\end{remark}

\section*{Acknowledgments}
We would like to thank Lorne Dawson for an interesting discussion on the topic of radicalization.  


\bibliography{rad_papers}{}

\begin{thebibliography}{10}

\bibitem{castillo2003models}
{\sc C.~Castillo-Chavez and B.~Song}, {\em Models for the transmission dynamics
  of fanatic behaviors}, Bioterrorism-mathematical modeling applications in
  homeland security. Philadelphia: SIAM,  (2003), pp.~155--172.

\bibitem{Haddad+2010}
{\sc W.~M. Haddad, V.~Chellaboina, and Q.~Hui}, {\em Nonnegative and
  Compartmental Dynamical Systems}, Princeton University Press, Princeton, New
  Jersey, 2010.

\bibitem{Home2011Prevent}
{\sc {Home Office}}, {\em Prevent strategy}.
\newblock June 2011.
  \url{https://www.gov.uk/government/uploads/system/uploads/attachment_data/file/97976/prevent-strategy-review.pdf}
  [Last Accessed: 12 February 2016].

\bibitem{horgan2009walking}
{\sc J.~Horgan}, {\em Walking away from terrorism: accounts of disengagement
  from radical and extremist movements}, Routledge, 2009.

\bibitem{jeffs2016activist}
{\sc R.~A. Jeffs, J.~Hayward, P.~A. Roach, and J.~Wyburn}, {\em Activist model
  of political party growth}, Physica A: Statistical Mechanics and its
  Applications, 442 (2016), pp.~359--372.

\bibitem{martcheva2015introduction}
{\sc M.~Martcheva}, {\em Introduction to Mathematical Epidemiology}, vol.~61,
  Springer, 2015.

\bibitem{mccluskey2006lyapunov}
{\sc C.~McCluskey}, {\em Lyapunov functions for tuberculosis models with fast
  and slow progression.}, Mathematical biosciences and engineering: MBE, 3
  (2006), pp.~603--614.

\bibitem{obama_2015}
{\sc B.~Obama}, {\em President {Obama}: Our fight against violent extremism}.
\newblock \textit{Los Angeles Times}, 17 February 2015. Retrieved from
  http://www.latimes.com/opinion/op-ed/la-oe-obama-terrorism-conference-20150218-story.html
  [Last accessed: 8 December 2015].

\bibitem{post2006countering}
{\sc J.~M. Post}, {\em Countering islamist militancy: An epidemiologic
  approach}, in Tangled Roots: Social and Psychological Factors in the Genesis
  of Terrorism, J.~Victoroff, ed., IOS Press, Amsterdam, 2006, ch.~23,
  pp.~399--409.

\bibitem{safety2013building}
{\sc {Public Safety}}, {\em Building resilience against terrorism: Canada's
  counter-terrorism strategy}.
\newblock 2013, Retrieved from
  \url{http://www.publicsafety.gc.ca/cnt/rsrcs/pblctns/rslnc-gnst-trrrsm/index-eng.aspx}
  [Last Accessed: 12 February 2016].

\bibitem{Department2015Countering}
{\sc {U.S. Department of Homeland Security}}, {\em Countering violent
  extremism}.
\newblock October 14, 2015. Retrieved from
  \url{http://www.dhs.gov/topic/countering-violent-extremism} [Last Accessed:
  12 February 2016].

\bibitem{van2002reproduction}
{\sc P.~Van~den Driessche and J.~Watmough}, {\em Reproduction numbers and
  sub-threshold endemic equilibria for compartmental models of disease
  transmission}, Mathematical biosciences, 180 (2002), pp.~29--48.

\bibitem{yacoubian2005rethinking}
{\sc M.~Yacoubian and P.~Stares}, {\em Rethinking the war on terror}, United
  States Institute of Peace Briefing,  (2005), p.~2.

\end{thebibliography}
\bibliographystyle{siam}

\end{document}